\documentclass[12pt]{article}

\textwidth=13.5cm
\textheight=23cm
\hoffset=-1cm 

\usepackage{latexsym,amstext,amsmath,amssymb,amsthm, amsopn}

\usepackage{color}
\definecolor{a}{rgb}{0.1,0.1,0.9} 
\definecolor{r}{rgb}{0.9,0.1,0.1} 

\newtheorem{thm}{Theorem}

\newtheorem{lem}{Lemma}
\newtheorem{cor}[lem]{Corollary}

\def \EE {\mathbb{E}}
\def \PP {\mathbb{P}}
\def \RR {\mathbb{R}}

\def \Rd {{\RR^d}}


\title{Time-dependent Schr\"odinger perturbations of transition
  densities\footnote{2000 MS Classification: 47A55, 60J35, 60J57. Keywords: relative
    Kato condition, conditional process.}}
\author{Krzysztof Bogdan\footnote{This author was partially supported by KBN.}, Wolfhard Hansen, Tomasz Jakubowski\footnotemark[2] 
}

\date{\today}
\begin{document}
\baselineskip=17pt
\maketitle
\begin{abstract}
We construct the fundamental solution
of $\partial_t-\Delta_y- q(t,y)$, for functions $q$ with a
certain integral space-time relative smallness, in particular for
those satisfying a relative Kato condition. 
The resulting transition density is comparable to the Gaussian kernel in finite
time, and it is even asymptotically equal 
to the Gaussian kernel (in small time) 
under the relative Kato condition.

The result is generalized to arbitrary strictly positive and finite time-nonhomogeneous 
transition densities on measure spaces.

We also discuss specific applications to Schr\"odinger perturbations of the
fractional Laplacian in view of the fact that the 3P Theorem holds for the
fundamental solution corresponding to  the operator.
\end{abstract}

\section{Main results and overview}
\label{sec:intro}
Let $d$ be a natural number. The Gaussian kernel on $\Rd$ is defined as
$$
g(s,x,t,y)=
\frac{1}{\big(4\pi(t-s)\big)^{d/2}}\exp\left(-\frac{|y-x|^2}{4(t-s)}\right)\,,\mbox{
  if } -\infty<s<t<\infty\,, 
$$ 
and we let $g(s,x,t,y)=0$ if $s\geq t$. Here $x,y\in \Rd$ are
arbitrary. It is well-known that $g$ is a time-homogeneous 
transition density with respect to the Lebesgue measure, $dz$, on
$\Rd$. In particular, for $x,y\in \Rd$,
$$
\int_\Rd g(s,x,u,z)g(u,z,t,y)\,dz=g(s,x,t,y)\,,\quad\mbox{ if } s<u<t\,.
$$
We will consider a Borel measurable function $q\,:\;\RR\times \Rd \to
\RR$, and numbers $h>0$ and $0 \leq \eta <1$,
such that for all $x,y\in \Rd$ and $s < t\leq s+h$, 
\begin{equation}\label{con:scqp}
\int_s^t \int_{\Rd}
 \frac{g(s,x,u,z)g(u,z,t,y)}{g(s,x,t,y)}|q(u,z)| \,dz\,du
\le \eta \,.
\end{equation}
\begin{thm}\label{Theorem1}
There is a unique continuous transition density $\tilde{g}$ such that 
\begin{equation}\label{eq:fs}
\int\limits_{s}^\infty \int\limits_{\Rd}
\tilde{g}(s,x,t,y)
\big[\partial_t\phi(t,y)+\Delta_y \phi(t,y)+q(t,y)\phi(t,y)\big]\,dtdy = -\phi(s,x)\,,
\end{equation}
$$
\mbox{ and }\quad 
\frac{\tilde{g}(s,x,t,y)}{g(s,x,t,y)} 
\leq 
\frac{1}{1-\eta} \exp\big\{\frac{\eta }{(1-\eta)h}(t-s)\big\}
\,.
$$
Here $s<t\in \RR$, $x,y\in \Rd$, and $\phi\in C^\infty_c(\RR\times
\Rd)$ are arbitrary. 
\end{thm}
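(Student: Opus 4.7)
The plan is to construct $\tilde{g}$ by the Duhamel perturbation series. Set $g_0=g$ and recursively
$$
g_{n+1}(s,x,t,y) = \int_s^t\int_{\Rd} g(s,x,u,z)\,q(u,z)\,g_n(u,z,t,y)\,dz\,du,
$$
and take $\tilde{g}=\sum_{n\ge 0} g_n$. The short-time bound is immediate: if $t-s\le h$, a one-line induction using hypothesis \eqref{con:scqp} gives $|g_n(s,x,t,y)|\le \eta^n g(s,x,t,y)$, so the series converges absolutely with $\tilde{g}\le g/(1-\eta)$ on such intervals.

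For $t-s>h$ I would partition $[s,t]$ into $k=\lceil (t-s)/h\rceil$ subintervals $[u_{j-1},u_j]$ of length at most $h$ and decompose $g_n$ according to the composition $(n_1,\dots,n_k)$ recording the number of integration variables that fall in each subinterval. Inserting auxiliary spatial integrations at the times $u_1,\dots,u_{k-1}$ via Chapman--Kolmogorov for $g$ factors each summand into a product of short-time iterated integrals to which the already established bound applies; a final use of Chapman--Kolmogorov for $g$ collapses the spatial integrations and yields $\eta^n g(s,x,t,y)$ for each composition. Summing over the $\binom{n+k-1}{k-1}$ compositions gives $g_n\le \binom{n+k-1}{k-1}\eta^n g$, hence $\tilde{g}\le (1-\eta)^{-k} g$; combined with $-\log(1-\eta)\le\eta/(1-\eta)$ and $k-1\le (t-s)/h$ this gives the exponential bound of the theorem, while uniform convergence on compacts yields continuity of $\tilde{g}$.

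The transition-density (Chapman--Kolmogorov) property $\int \tilde{g}(s,x,u,z)\tilde{g}(u,z,t,y)\,dz=\tilde{g}(s,x,t,y)$ comes out of the series: $\int g_m(s,x,u,z)g_n(u,z,t,y)\,dz$ is exactly the portion of $g_{m+n}$ in which $m$ integration variables lie below $u$ and $n$ above, and summation over $m+n=k$ recovers $g_k$. For the PDE \eqref{eq:fs}, I would use that $g$ is the fundamental solution of $\partial_t-\Delta_y$, so $\int_s^\infty\!\int g(\partial_t\phi+\Delta_y\phi)\,dt\,dy=-\phi(s,x)$. The a priori bound justifies Fubini throughout, so from the integral equation
$$
\tilde{g}(s,x,t,y)=g(s,x,t,y)+\int_s^t\int_{\Rd} g(s,x,u,z)q(u,z)\tilde{g}(u,z,t,y)\,dz\,du
$$
(which is just the rearrangement $\sum g_n=g+\sum_{n\ge 1} g_n$), the defect
$$
A(s,x):=\int_s^\infty\!\int_{\Rd} \tilde{g}(s,x,t,y)\bigl[\partial_t\phi+\Delta_y\phi+q\phi\bigr]\,dt\,dy+\phi(s,x)
$$
is readily seen to satisfy $A(s,x)=\int_s^\infty\!\int_{\Rd} g(s,x,u,z)q(u,z)A(u,z)\,dz\,du$; iteration via \eqref{con:scqp} forces $A\equiv 0$. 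Uniqueness follows by applying the same homogeneous identity to the difference of two candidate densities.

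The main obstacle is the long-time estimate: the combinatorial bookkeeping of integration times across subintervals, together with the iterated use of Chapman--Kolmogorov for $g$ to decouple them, is where the real work lies. Once that bound is secured, the rest reduces to standard exchanges of sums, integrals, and derivatives.
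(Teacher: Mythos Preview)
Your upper-bound argument is correct and takes a genuinely different route from the paper's. The paper first converts the short-time hypothesis into the linear bound $p_1\le[\eta+\beta(t-s)]\,p$ with $\beta=\eta/h$ (Lemma~\ref{lem:intercept}), and then proves by induction, using a nontrivial combinatorial identity (Lemma~\ref{lem:bc}), the refined per-term estimate $p_n\le p\sum_{k=0}^n\binom{n}{k}\frac{(\beta(t-s))^k}{k!}\eta^{n-k}$ (Lemma~\ref{technical}); summing over $n$ gives Theorem~\ref{th:optyl}. Your partition-and-compositions argument bypasses Lemmas~\ref{lem:bc} and~\ref{technical} entirely and goes directly from the short-time bound $|g_n|\le\eta^n g$ to $|g_n|\le\binom{n+k-1}{k-1}\eta^n g$ via Chapman--Kolmogorov, yielding $(1-\eta)^{-k}$ after summation and then the stated exponential through $-\log(1-\eta)\le\eta/(1-\eta)$ and $k-1\le(t-s)/h$. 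The paper's method buys a sharper bound on each individual $p_n$ and works under the more general linear hypothesis~(\ref{con:A1}); yours is more elementary and uses only the original hypothesis~(\ref{con:scqp}). Both yield exactly the bound of Theorem~\ref{Theorem1}. Your derivation of~(\ref{eq:fs}) via the homogeneous equation $A=P^qA$ is also a valid alternative to the paper's telescoping $\tilde P(L+Q)\phi=\sum_n(PQ)^nPL\phi+\sum_n(PQ)^{n+1}\phi=-\phi$.

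There is, however, one genuine omission: you do not verify that $\tilde g>0$, which is part of the definition of a transition density here (see~(\ref{eq:nn})). For signed $q$ this does not fall out of the series or of Chapman--Kolmogorov. The paper handles it (in the proof of Theorem~\ref{Theorem1g}) by grouping the alternating series to get $\tilde p_{-q_-}\ge(1-\eta)p$ on intervals of length at most $h$, propagating positivity to all times by Chapman--Kolmogorov, and then using $\tilde p_q=\widetilde{(\tilde p_{-q_-})}_{q_+}\ge\tilde p_{-q_-}$. You should include an argument of this sort.
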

We consider $\Delta+q$ as an additive perturbation of the Laplacian
$\Delta$ by the operator of multiplication by $q$ (i.e. a
Schr\"odinger perturbation).
According to (\ref{eq:fs}), $\tilde{g}$ is 
the integral kernel of the left inverse of $-\big(\partial_t+\Delta_y+q\big)$.
Put differently, $f:(t,y)\mapsto \tilde{g}(s,x,t,y)$ solves 
$(\partial_t-\Delta_y-q) f=\varepsilon_{(s,x)}$ in the sense
of distributions. Thus, $\tilde{g}$ is the fundamental solution of $\partial_t-\Delta_y-q$ 
(\cite{MR1978999}).

As we will see, $\tilde{g}$ is constructed by means of $g$ and $q$ only, without
referring to~$\Delta$. A similar procedure applies to
the fundamental solution of the fractional Laplacian
$\Delta^{\alpha/2}=-(-\Delta)^{\alpha/2}$. At the end of the paper we give references
and discuss 
these two important examples in some detail.

The primary goal of the paper is, however, to construct and estimate analogous 
time-dependent, or non-autonomous, Schr\"odinger perturbations for 
more general transition densities. 
We work under the appropriate
assumption of {\it relative smallness}, or {\it relative Kato} condition on $q$.
We give explicit upper bounds for the resulting transition density, which are new even
in the autonomous Gaussian case. 
Our development is motivated by the role of the celebrated
3G Theorem in studying Schr\"odinger-type perturbations of Green functions
\cite{MR2160104,MR2207878,MR2195185}, see also
\cite{MR936811}, \cite{MR1329992}, \cite{MR1473631}, \cite{MR1671973}, \cite{MR1825645},
\cite{MR1920109}.
Another motivation comes from  a recent estimate, the 3P Theorem of
\cite{MR2283957} for the fundamental solution of $\Delta^{\alpha/2}$.
The estimate was used in \cite{MR2283957} to construct the transition
density of autonomous {\it gradient} perturbations of $\Delta^{\alpha/2}$,
in a way resembling the above mentioned study of Schr\"odinger
perturbations of Green functions by means of the 3G Theorem (see also \cite{J3}).
\cite{MR2283957} and the present paper show that a perturbation
technique similar to that of \cite{MR2207878}
applies even more naturally to the {\it parabolic} Green
function (that is,  the fundamental solution, or transition density). 
We propose an explicit construction of
the perturbed transition density under a minimum of assumptions, corresponding
with the generality of \cite{MR2160104,MR2207878,MR2195185,MR1920109}.
We refer the reader to \cite[Theorem 3.5, p. 71]{MR591851},
\cite{MR1335452}, and \cite[Theorem 19, Ch.VIII]{MR0117523} for a
selection of results in the perturbation theory of lineal operators
and semigroups on Banach spaces. 
For recent developments in Schr\"odinger perturbations of
time-nonhomogeneous transition probabilities
we refer the reader to \cite{MR2253111},
\cite{MR2253015}, \cite{MR2395164}.
We like to point out that our main estimate, Theorem~\ref{th:optyl}
below, is more precise and explicit than those mentioned above. 
Accordingly, it also strengthens in the given context the
celebrated Khasminski's lemma, one of the main tools in the probabilistic theory
of Schr\"odinger perturbations of generators of Markov processes (see, e.g.,
\cite{MR1329992}, \cite{MR2395164}).
This strengthening is of independent interest --- the estimate 
is valid in the full range of times, rather than only in small time
intervals, and the proof gives a deeper insight into the interplay
between individual terms of the series involved.

To explain the connection of Theorem~\ref{Theorem1} to our general
results we note that $\tilde{g}$ satisfies  the following 
 equation for all $x,y\in \Rd$ and $s,t\in \RR$,
$$
\tilde{g}(s,x,t,y)=g(s,x,t,y)+\int_\RR\int_X
g(s,x,u,z)q(u,z)\tilde{g}(u,z,t,y)\,dzdu
$$
(see the proof of Theorem~\ref{Theorem1g}).
This is called \emph{Duhamel's formula} or \emph{perturbation
  formula}. The equation implicitly defines the perturbed transition 
density in this, and in a more general situation, which we will now discuss.

Consider an arbitrary set $X$ with a $\sigma$-algebra $\mathcal{M}$ and
a (nonnegative)
measure $m$ defined on $\mathcal{M}$. To simplify the notation we will
write $dz$ for $m(dz)$ in what follows. Consider the $\sigma$-algebra
$\mathcal{B}$ of Borel subsets of $\RR$, and the Lebesgue measure, $du$,
defined on $\mathcal{B}$. The {\it space-time},
$\RR\times X$, will be equipped with 
$\sigma$-algebra $\mathcal{B}\otimes\mathcal{M}$ and  product
measure $du\,dz=du\,m(dz)$. 

Let $p$ be a 
$\mathcal{B}\times \mathcal{M}\times\mathcal{B}\times\mathcal{M}$-measurable 
function defined (everywhere) on $\RR\times X\times \RR\times X$.
We will call $p$ a {\it transition density} on $X$ if
\begin{equation}
  \label{eq:conzero}
p(s,x,t,y)=0, \quad \mbox{ whenever } s\geq t\,,   
\end{equation}
\begin{equation}
  \label{eq:nn}
0<p(s,x,t,y)<\infty, \quad \mbox{ when } s< t\,,\; x,y\in X\,,  
\end{equation}
and the following Chapman-Kolmogorov equation holds, if $s<u<t$,
\begin{equation}
  \label{eq:ChK}
  \int_X p(s,x,u,z)p(u,z,t,y)\,dz=p(s,x,t,y)\,,\quad x,y\in X\,.
\end{equation}
Note that we require strict positivity in (\ref{eq:nn}), while
(\ref{eq:conzero}) is merely a convention. By (\ref{eq:nn}) and (\ref{eq:ChK}),
$m$ is necessarily $\sigma$-finite on $X$.
The reader may regard $s$ and $x$ in $p(s,x,t,y)$
as the starting time and position of a {\it variable} mass spreading
in $X$, and  $t$, $y$ as the ending time and position. Thus,
$\int_X p(s,x,t,y)dy$ is the total mass at time
$t$.
We will say that transition densities $p'$ and $p''$ on $X$ are
{\it comparable locally in time} if for every $h>0$ there is a (finite)
constant $c=c(h)$ such that $c^{-1}p''(s,x,t,y)\leq p'(s,x,t,y)\leq c p''(s,x,t,y)$ 
for all $x,y\in X$ provided $s<t<s+h$. We will say that they are {\it
  asymptotically equal}  if
$c(h)$ may be  chosen in such a way 
that $c(h)\to 1$ as $h\to 0^+$.

All the functions discussed in
the sequel will be assumed 
measurable with respect to the relevant
$\sigma$-algebras, usually with respect to $\mathcal{B}\times\mathcal{M}$. 

If $q$ is a {\it nonnegative} function defined on
$\mathcal{B}\times\mathcal{M}$, then we let
\begin{equation}
  \label{eq:des}
\eta^*(q)=\inf \eta\,,
\end{equation}
where the infimum is taken over all $\eta>0$ with the property that 
there exists
$h>0$ such that for all $x,y\in \Rd$ and $s < t\leq s+h$,
\begin{equation}\label{con:scp}
\int_s^t 
\int_{X}
p(s,x,u,z)q(u,z)p(u,z,t,y)
\,dzdu
\le \eta \,p(s,x,t,y)\,.
\end{equation}
We will say that a (signed) function $q:\,\RR\times X\to \RR$ is
{\it relatively bounded} (at small times, with respect to $p$ and
$m$) if $0\leq \eta^*(|q|)<\infty$. We will say that $q$
is {\it relatively small} if $0\leq \eta^*(|q|)<1$, and we will say that $q$
is {\it relatively Kato}\footnote{A {\it different} ``relative Kato condition''
  is used in \cite[(4)]{MR2137058} and
  \cite[(2)]{MR2320691}.} 
if $\eta^*(|q|)=0$.


\begin{thm}\label{Theorem1g}
Consider a real-valued function $q$ on $\RR\times X$.
If $q$ is relatively small, then there is a unique
transition density $\tilde{p}$ on $X$ locally in time comparable with $p$, such that 
for all $s,t\in \RR$, and $x,y\in  X$, we have 
\begin{equation}\label{eq:df}
\tilde{p}(s,x,t,y)=p(s,x,t,y)+\int_\RR\int_X
p(s,x,u,z)q(u,z)\tilde{p}(u,z,t,y)\,dzdu\,.
\end{equation}
If $q$ is relatively Kato, then $\tilde{p}$ and $p$ are
asymptotically equal.
\end{thm}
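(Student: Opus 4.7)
My plan is to define $\tilde p$ by Neumann iteration of Duhamel's formula: set $p_0 = p$ and
$$p_n(s,x,t,y) = \int_s^t \int_X p(s,x,u,z)\,q(u,z)\,p_{n-1}(u,z,t,y)\,dz\,du, \qquad n \geq 1,$$
then put $\tilde p = \sum_{n \geq 0} p_n$. Once the series converges absolutely and the transition-density axioms (\ref{eq:conzero})--(\ref{eq:ChK}) hold, the Duhamel equation (\ref{eq:df}) will be a one-step re-indexing. The core analytic task is a dominating bound on $\sum_n \bar p_n$, where $\bar p_n$ denotes the analogous iterate with $|q|$ in place of $q$, so that $|p_n| \leq \bar p_n$.

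Fix $\eta \in (\eta^*(|q|), 1)$ and $h > 0$ for which (\ref{con:scp}) holds. Induction on $n$ using (\ref{con:scp}) gives $\bar p_n \leq \eta^n p$ on intervals of length $\leq h$, so $\sum_n \bar p_n \leq p/(1-\eta)$ there. For arbitrary $t > s$, I partition $[s,t]$ into $N = \lceil (t-s)/h \rceil$ subintervals of length $\leq h$; in each $\bar p_n$, I group the integration times by containing subinterval and insert (\ref{eq:ChK}) for $p$ at each partition point. The resulting multi-sum factors across subintervals and telescopes to
$$\sum_n \bar p_n(s,x,t,y) \leq (1-\eta)^{-N}\,p(s,x,t,y) \leq \frac{1}{1-\eta}\exp\!\left(\frac{\eta(t-s)}{(1-\eta)h}\right) p(s,x,t,y),$$
where the last step uses $\log(1/(1-\eta)) \leq \eta/(1-\eta)$. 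This simultaneously proves absolute convergence of $\tilde p$ and the upper half of local comparability.

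Measurability is inherited from $p$ and $q$, and (\ref{eq:conzero}) is automatic. For Chapman--Kolmogorov, I insert an intermediate time $u \in (s,t)$ into each $p_n$ and split according to how many integration times lie before $u$; applying (\ref{eq:ChK}) for $p$ at time $u$ then exhibits the sum as the $X$-integral of $\tilde p(s,x,u,\cdot)$ against $\tilde p(u,\cdot,t,y)$, with the exchange of sum and integral justified by the previous paragraph. The main obstacle is \emph{strict positivity}. The direct bound $\tilde p \geq p - \sum_{n \geq 1}\bar p_n \geq \frac{1-2\eta}{1-\eta}\,p$ is only useful when $\eta < 1/2$. For the full range $\eta < 1$ I would bootstrap via scaling: since $\eta^*(\lambda|q|) = \lambda\eta^*(|q|)$, choosing $\lambda_0$ with $\lambda_0 \eta^*(|q|) < 1/2$ produces a strictly positive transition density $p^{(1)}$ for $\lambda_0 q$, locally comparable with $p$. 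Local comparability of $p^{(1)}$ with $p$ transfers the smallness condition: a small residual $\mu q$ is relatively small with respect to $p^{(1)}$ (with the new $\eta$ again below $1/2$ after possibly shrinking $h$), so the direct lower bound applies again. Iterating finitely many times until $\lambda_0 + \sum \mu_k = 1$, and invoking the composition identity for iterated perturbations, identifies the final output with $\tilde p$ and certifies positivity.

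Uniqueness is then direct: if $\tilde p, \tilde p'$ are two locally comparable solutions, their difference $r$ satisfies $r(s,x,t,y) = \int_s^t \int_X p(s,x,u,z)\,q(u,z)\,r(u,z,t,y)\,dz\,du$, and iterating this identity $N$ times on an interval of length $\leq h$ while using local comparability of $r$ yields $|r| \leq C\eta^N p \to 0$; Chapman--Kolmogorov (now available for both) propagates the equality to all $s < t$. Finally, under the relative Kato condition $\eta$ may be chosen arbitrarily small at the price of shrinking $h$, so the upper bound $\tilde p/p \leq 1/(1-\eta)$ and the matching lower bound produced in the positivity step both tend to $1$ uniformly as $h \to 0^+$, yielding asymptotic equality.
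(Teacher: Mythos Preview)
Your argument is largely sound, and in one place cleaner than the paper's, but the positivity step contains a real gap.

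\textbf{Upper bound.} Your route is correct and genuinely different from the paper's. The paper proves the bound of Theorem~\ref{th:optyl} by first estimating each $p_n$ separately via an inductive argument resting on a combinatorial identity (Lemmas~\ref{lem:bc} and~\ref{technical}), then summing. You instead bound $\sum_n \bar p_n\le(1-\eta)^{-1}p$ on intervals of length $\le h$ and then propagate via the Chapman--Kolmogorov equation for $\tilde p_{|q|}$ (which is exactly Lemma~\ref{t:chapkol}) across the $N$ partition cells, obtaining $(1-\eta)^{-N}p$. This is simpler and yields the same final estimate; your last inequality follows from $N-1<(t-s)/h$ together with $\log\frac{1}{1-\eta}\le\frac{\eta}{1-\eta}$.

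\textbf{Positivity.} Here your sketch does not close. The direct bound $\tilde p\ge\frac{1-2\eta}{1-\eta}\,p$ is fine for $\eta<\tfrac12$, but the bootstrap you propose for $\eta^*(|q|)\in[\tfrac12,1)$ is not justified. At the $k$-th step the relative-smallness constant of $\mu_k q$ with respect to $p^{(k)}$ is controlled only through the accumulated comparability constants $A_k\le p^{(k)}/p\le B_k$, giving $\eta^{(k)}\le (B_k^2/A_k)\,\mu_k\eta$. Since $B_k^2/A_k$ deteriorates at every step, the admissible $\mu_k$ shrinks, and there is no reason $\sum_k\mu_k$ reaches $1-\lambda_0$ in finitely many steps; shrinking $h$ does not help because $\eta$ cannot be pushed below $\eta^*(|q|)$. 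The paper sidesteps this by decomposing $q=q_+-q_-$. For the alternating series $\tilde p_{-q_-}=\sum_n(-1)^n(P^{q_-})^np$ one has $(P^{q_-})^{n+1}p\le\eta\,(P^{q_-})^np$ on intervals of length $\le h$, so grouping consecutive terms yields $\tilde p_{-q_-}\ge(1-\eta)p>0$ there, and Chapman--Kolmogorov (already established) extends positivity globally. One then perturbs $\tilde p_{-q_-}$ by the nonnegative $q_+$, which only increases the kernel, and the composition identity (Lemma~\ref{lem:aep}) identifies the result with $\tilde p_q$. This gives the clean lower bound $(1-\eta)^N$ in (\ref{eq:comp}) for every $\eta<1$.

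\textbf{Uniqueness and the Kato case.} Your iteration of the difference is equivalent to the paper's one-line algebraic version $\tilde p=\sum_n(P^q)^n(I-P^q)\tilde p=\tilde p_q$. Your asymptotic-equality argument is fine, since under the Kato condition $\eta$ may indeed be taken below $\tfrac12$, where your direct lower bound already applies.
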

We note that explicit upper and lower bounds for
$\tilde{p}$ exist expressed in terms of $\eta^*(q_+)$ and $\eta^*(q_-)$,
see Theorem~\ref{th:optyl} and (\ref{eq:comp}) (see also (\ref{eq:optdK})).

The paper is organized as follows. In Section~\ref{sec:al} we describe
the basic formalism of the {\it perturbation series} in the context of the
Chapman-Kolmogorov equation.
In Section~\ref{sec:san} we reformulate the relative
  boundedness and smallness of $q$. By a combinatorial argument, 
we prove our main estimate, Theorem~\ref{th:optyl}, 
for the perturbation series for relatively small $q\geq 0$.
In Section~\ref{sec:gp} we consider signed
relatively small $q$ and we prove Theorem~\ref{Theorem1g}.
In Section~\ref{sec:ss} we discuss in more detail Schr\"odinger
perturbations of the transition densities of Laplacian and fractional Laplacian, 
and we give the proof of Theorem~\ref{Theorem1}.
In view of the fact that the transition density of the fractional
Laplacian (but not that of the Laplacian) satisfies the 3P Theorem,
in Section~\ref{sec:ss} we characterize
relative Kato condition by means of the parabolic Kato condition
studied in \cite{MR1488344}. 

Our main goal is to give applications motivating the use of 
{\it relative smallness} in perturbation theory of
transition densities, along with a self-contained exposition of
some of the relevant techniques. We do not attempt full generality here.
Possible and forthcoming generalizations are mentioned 
in Section~\ref{sec:fd}, where we also give a probabilistic
interpretation of our results.

\section{Algebra of perturbation series}
\label{sec:al}
Let $q:\,\RR\times X\to \RR$. 
The identities we intend to prove below rely merely on changing the order of
integration, which is justified if the integrals involved are
absolutely convergent or {\it nonnegative}.  
We shall first consider the latter situation and we will {\it assume that $q\ge 0$}.

Duhamel's formula (\ref{eq:df}) suggests the following definitions.
For $s,t\in \RR$ and $x,y \in X$,  we let $p_0(s,x,t,y)  =  p(s,x,t,y)$,  
\begin{equation}
\label{eq:18.75}
 p_n(s,x,t,y)  =  
\int_s^t\int_{X} 
p_{n-1}(s,x,u,z)p(u,z,t,y)\,q(u,z)\,dzdu\,,
\end{equation}
for $n \ge 1$, and we define the perturbation of $p$ by $q$,
\begin{equation}
  \label{eq:defk}
  \tilde{p}_q(s,x,t,y)=\sum_{n=0}^\infty p_n(s,x,t,y)\,,\quad x,y\in  X\,,\quad
  s,t\in \RR\,.
\end{equation}
If $s\ge t$, then $p_n(s,x,t,y) = 0$ for every $n\ge 0$ and hence
$\tilde{p}_q(s,x,t,y)=0$.

Since $p(s,x,t,y)=0$ for $s\geq t$, we could write (\ref{eq:18.75}) as 
$$
 p_n(s,x,t,y)  =  
\int_\RR\int_{X} 
p_{n-1}(s,x,u,z)p(u,z,t,y)\,q(u,z)\,dzdu\,,
$$
for {\it all} $s,t\in \RR$ and $x,y \in X$, so the reader should not
be alarmed if we occasionally simplify our notation in this way. 
\begin{lem}\label{pmnm}
For  all $s<u<t$, $x,y\in X$, and $n=0,1,\ldots$, 
\begin{equation}\label{eq:lem}
\sum_{m=0}^n  \int_{ X} p_m(s,x,u,z)  p_{n-m}(u,z,t,y)\,dz = p_n(s,x,t,y)\,.
\end{equation}
\end{lem}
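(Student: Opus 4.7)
The plan is to prove (\ref{eq:lem}) by induction on $n$, exploiting the recursive definition (\ref{eq:18.75}) of $p_n$ together with the Chapman-Kolmogorov equation (\ref{eq:ChK}) for the unperturbed density $p=p_0$. The base case $n=0$ is exactly (\ref{eq:ChK}). For the inductive step, I would expand $p_n(s,x,t,y)$ using its definition, writing the outer integration variable as $v$ (to avoid a notational clash with the fixed intermediate time $u$ in the statement), and then partition the range of $v$ at $u$.

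On the slice $s<v<u$, I would invoke (\ref{eq:ChK}) for $p(v,w,t,y)$ with intermediate time $u$, swap orders of integration, and observe that the resulting inner integral in $(v,w)$ is precisely $p_n(s,x,u,z)$ by (\ref{eq:18.75}). This contributes the single term $\int_X p_n(s,x,u,z)\,p(u,z,t,y)\,dz$, i.e.\ the $m=n$ summand in (\ref{eq:lem}). On the slice $u<v<t$, I would instead apply the inductive hypothesis at level $n-1$ to decompose $p_{n-1}(s,x,v,w) = \sum_{k=0}^{n-1}\int_X p_k(s,x,u,z)\,p_{n-1-k}(u,z,v,w)\,dz$. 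After re-exchanging integrals and relabeling, the inner $(v,w)$-integral is recognized as $p_{n-k}(u,z,t,y)$ for each $k$ via (\ref{eq:18.75}), producing the summands $m=0,\ldots,n-1$ of (\ref{eq:lem}). Adding the two contributions completes the induction.

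The only potentially delicate point is the repeated use of Fubini-Tonelli, but this is harmless under the standing assumption of Section~\ref{sec:al} that $q\ge 0$, since then every integrand in sight is nonnegative and $\sigma$-finiteness of $m$ is ensured by (\ref{eq:nn}) and (\ref{eq:ChK}). Beyond that, the main obstacle is purely combinatorial bookkeeping: one must verify that the single term produced by the $v<u$ slice and the $n$ terms produced by the $v>u$ slice really assemble into the full sum $\sum_{m=0}^n$ on the right of (\ref{eq:lem}). The strict inequalities $s<u<t$ in the hypothesis are exactly what is needed to split the integration cleanly and to apply both (\ref{eq:ChK}) and the inductive hypothesis inside each slice.
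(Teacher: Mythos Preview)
Your proposal is correct and follows essentially the same approach as the paper: induction on $n$ with the base case given by (\ref{eq:ChK}), the $m=n$ summand produced via Chapman--Kolmogorov on the time slice $(s,u)$, and the remaining summands $m=0,\ldots,n-1$ produced via the inductive hypothesis on the slice $(u,t)$. The only cosmetic difference is that the paper runs the argument from the left-hand side of (\ref{eq:lem}) toward $p_n$, whereas you start from $p_n$ and split it; the underlying computation is the same.
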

\begin{proof}
We note that (\ref{eq:lem}) is true for $n=0$ by (\ref{eq:ChK}). Assume that $n\geq 1$
and (\ref{eq:lem}) holds for $n-1$.  The sum of the 
 first $n$ terms in
(\ref{eq:lem}) can be dealt with by induction: 
\begin{eqnarray}
&& 
\sum_{m=0}^{n-1} \int_{ X}  p_m(s,x,u,z)  p_{n-m}(u,z,t,y)\,dz \nonumber\\
& = &   
\sum_{m=0}^{n-1} \int_{ X}   p_m(s,x,u,z)  
\int_u^t \int_ X  p_{n-1-m}(u,z,r,w) p(r,w,t,y) \,q(r,w)dwdr \,dz \nonumber\\
& = &   \int_u^t \int_{ X} \left(\sum_{m=0}^{n-1}  \int_{ X} 
p_m(s,x,u,z) p_{n-1-m}(u,z,r,w)\,dz\right) p(r,w,t,y) \,q(r,w)dwdr\nonumber\\
& = &   \int_u^t \int_{ X}  p_{n-1}(s,x,r,w) p(r,w,t,y) \,q(r,w)dwdr\,. \label{eq1:lem}
\end{eqnarray}
By (\ref{eq:18.75}), the $(n+1)$-st term is 
\begin{eqnarray}
&& \int_{ X} p_n(s,x,u,z)  p_0(u,z,t,y) \,dz\nonumber\\
& = &   \int_{ X} 
\int_s^u \int_{ X}  
p_{n-1}(s,x,r,w)p(r,w,u,z) \,q(r,w)dwdr\;  p(u,z,t,y)\,dz \nonumber\\
& = &   \int_s^{u} \int_{ X}  p_{n-1}(s,x,r,w)  p(r,w,t,y) \,q(r,w)dwdr\,. \label{eq2:lem}
\end{eqnarray}
and (\ref{eq:lem}) follows adding (\ref{eq1:lem})  and (\ref{eq2:lem}).
 \end{proof}

We next prove the Chapman-Kolmogorov equation for $\tilde{p}_q=\sum_{n=0}^\infty p_n$. 

\begin{lem}\label{t:chapkol}
For all $s<u<t$ and $x,y \in  X$, 
$$
\int_{ X} \tilde{p}_q(s,x,u,z)  \tilde{p}_q(u,z,t,y) \,dz = \tilde{p}_q(s,x,t,y)\,.
$$
\end{lem}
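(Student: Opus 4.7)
The plan is to deduce the Chapman-Kolmogorov equation for $\tilde{p}_q$ directly from Lemma~\ref{pmnm} by summing in $n$ and interchanging the order of summation and integration, which is legitimate here because everything in sight is nonnegative (we are in the standing assumption $q\ge 0$ of this section).

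More precisely, I would start from the definition $\tilde{p}_q(s,x,t,y)=\sum_{n=0}^\infty p_n(s,x,t,y)$, and rewrite each $p_n(s,x,t,y)$ on the right-hand side using \eqref{eq:lem} from Lemma~\ref{pmnm}. This gives
$$
\tilde{p}_q(s,x,t,y)=\sum_{n=0}^\infty \sum_{m=0}^n \int_X p_m(s,x,u,z)\,p_{n-m}(u,z,t,y)\,dz.
$$
Since $p\ge 0$ and $q\ge 0$, every $p_n\ge 0$, so Tonelli's theorem lets me move the integral outside both sums, and the double sum is an unconditionally convergent (in $[0,\infty]$) sum that I reindex as a Cauchy product. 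Setting $k=n-m$ yields
$$
\sum_{n=0}^\infty \sum_{m=0}^n p_m(s,x,u,z)\,p_{n-m}(u,z,t,y)
=\Bigl(\sum_{m=0}^\infty p_m(s,x,u,z)\Bigr)\Bigl(\sum_{k=0}^\infty p_k(u,z,t,y)\Bigr),
$$
which is exactly $\tilde{p}_q(s,x,u,z)\,\tilde{p}_q(u,z,t,y)$. Integrating in $z$ gives the claim.

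The only potential obstacle is making sure the interchange is justified, but this is immediate from Tonelli applied to the nonnegative function $(z,m,n)\mapsto p_m(s,x,u,z)\,p_{n-m}(u,z,t,y)$ on $X\times \{(m,n):0\le m\le n\}$ (with counting measure on the discrete factor). No finiteness of $\tilde{p}_q$ needs to be assumed at this stage; the identity holds in $[0,\infty]$. Finer comparability estimates on $\tilde{p}_q$ (ensuring finiteness under relative smallness of $q$) will be supplied by the results of the next sections.
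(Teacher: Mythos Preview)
Your argument is correct and essentially identical to the paper's: both expand $\tilde{p}_q$ as $\sum_n p_n$, invoke Lemma~\ref{pmnm}, and use nonnegativity (Tonelli) to reorganize the double sum as a Cauchy product. The only cosmetic difference is that the paper runs the chain of equalities starting from $\int_X \tilde{p}_q(s,x,u,z)\tilde{p}_q(u,z,t,y)\,dz$ rather than from $\tilde{p}_q(s,x,t,y)$.
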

\begin{proof}
By 
Lemma \ref{pmnm}, 
\begin{eqnarray*}
\int_{ X} \tilde{p}_q(s,x,u,z)  \tilde{p}_q(u,z,t,y) \,dz & =&  
\int_{ X} \sum_{i=0}^\infty p_i(s,x,u,z) \sum_{j=0}^\infty p_j(u,z,t,y) \,dz\\
& = &    \sum_{n=0}^\infty    \sum_{m=0}^n  \int_{ X}
p_m(s,x,u,z)  p_{n-m}(u,z,t,y) \,dz \\
& = & \sum_{n=0}^\infty p_n(s,x,t,y) = \tilde{p}_q(s,x,t,y)\,.
\end{eqnarray*}
\end{proof}
We will need the following extension of (\ref{eq:18.75}). 
\begin{lem}\label{lem:18.76}
For all $n=1,2,\ldots$, $m=0,1,\ldots,n-1$, $s,t\in \RR$ and $x,y \in  X$, 
\begin{equation}
\label{eq:18.76}
 p_n(s,x,t,y)  =  
\int_s^t\int_{ X} 
p_{n-1-m}(s,x,u,z) p_{m}(u,z,t,y)\,q(u,z)\,dzdu\,.
\end{equation}
\end{lem}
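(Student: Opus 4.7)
The plan is induction on $n$. The base case $n=1$ forces $m=0$, which is just the defining identity (\ref{eq:18.75}) for $p_1$.

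For the inductive step at level $n$, the case $m=0$ is again (\ref{eq:18.75}), so I may assume $1\le m\le n-1$ and that (\ref{eq:18.76}) has already been established at level $n-1$ for every $m'\in\{0,1,\ldots,n-2\}$. I expand the factor $p_m(u,z,t,y)$ using the defining recurrence,
\begin{equation*}
p_m(u,z,t,y)=\int_u^t\int_X p_{m-1}(u,z,r,w)\,p(r,w,t,y)\,q(r,w)\,dw\,dr,
\end{equation*}
substitute into the right-hand side of (\ref{eq:18.76}), and use Fubini on the region $\{s<u<r<t\}$ to exchange the two time integrations. This rewrites the right-hand side of (\ref{eq:18.76}) as
\begin{equation*}
\int_s^t\int_X p(r,w,t,y)\,q(r,w)\left[\int_s^r\int_X p_{n-1-m}(s,x,u,z)\,p_{m-1}(u,z,r,w)\,q(u,z)\,dz\,du\right]dw\,dr.
\end{equation*}
The interchange is legitimate because $q\ge 0$ is a standing assumption of Section~\ref{sec:al} and the $p_j$ are nonnegative by construction.

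The bracketed quantity is exactly the right-hand side of (\ref{eq:18.76}) at level $n-1$ with parameter $m'=m-1$, which is admissible since $0\le m-1\le n-2$; the inductive hypothesis therefore identifies it with $p_{n-1}(s,x,r,w)$. The resulting outer double integral matches (\ref{eq:18.75}) and equals $p_n(s,x,t,y)$, closing the induction.

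I expect no serious obstacle here: the argument is essentially a reshuffling of an iterated integral representation of $p_n$, and the $q\ge 0$ convention maintained throughout Section~\ref{sec:al} frees one from any measurability or integrability concerns when invoking Fubini. The only point demanding attention is the index bookkeeping, namely verifying that the shifted parameter $m-1$ still lies in the admissible range needed to apply the induction hypothesis at level $n-1$.
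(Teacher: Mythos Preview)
Your proof is correct and follows essentially the same inductive route as the paper: induction on $n$, with the case $m=0$ handled by the definition~(\ref{eq:18.75}) and the case $m\ge 1$ reduced to level $n-1$ with parameter $m-1$ via Fubini. The only cosmetic difference is the order of operations: the paper expands the left factor $p_n$ in the definition of $p_{n+1}$ using the inductive hypothesis and then collapses the inner integral via the definition of $p_m$, whereas you expand the right factor $p_m$ via its definition and then collapse the inner integral using the inductive hypothesis---dual versions of the same reshuffling.
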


\begin{proof}
For $m=0$, equality (\ref{eq:18.76}) holds by  definition of $p_n$. In
particular, this proves our claim for $n=1$. If $n\geq 1$
such that (\ref{eq:18.76}) holds, then, for every 
$m=1,2,\ldots,n$,
\begin{eqnarray*}
&&  p_{n+1}(s,x,t,y)=\int_\RR \int_ X 
p_{n}(s,x,u,z) p(u,z,t,y)\,q(u,z)\,dzdu\\
&=&
\int_\RR \int_ X \int _\RR \int_ X  
p_{n-1-(m-1)}(s,x,w,v)p_{m-1}(w,v,u,z)\,q(v,w)dwdv\\
&&\qquad\qquad\qquad\qquad \qquad\qquad\qquad \qquad\qquad
p(u,z,t,y)\,q(u,z)\,dzdu\\
&=&
\int _\RR \int_ X  
p_{n-m}(s,x,w,v)p_{m}(w,v,t,y)\,q(v,w)dwdv\,.
\end{eqnarray*}
\end{proof}
\section{Estimate from above}
\label{sec:san}
In this section we will only consider {\it relatively bounded $q\geq 0$}.
Given $s<t$, we let $I(s,t)$ be the smallest number
such that for all $x,y \in  X$, 
\begin{equation}
  \label{eq:dI}
\int_s^t \int_{ X}
 p(s,x,u,z)p(u,z,t,y) \,q(u,z)\,dzdu
\le I(s,t) \,p(s,x,t,y)\,.
\end{equation}
Relative boundedness of $q$ implies that $I(s,t)$ is finite, if
$t-s$ is small. The following lemma yields that then 
$I(s,t)$ is  finite for \emph{all} $s<t$.
\begin{lem}\label{lem:sa}
$I(s,v)\leq I(s,t)+I(t,v)$, whenever $s<t<v$. 
\end{lem}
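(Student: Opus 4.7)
The plan is to split the time integral defining the left side of (\ref{eq:dI}) at the intermediate time $t$, and use the Chapman--Kolmogorov equation (\ref{eq:ChK}) to reduce each piece to an expression to which the definition of $I$ applies.

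Concretely, I would write
\[
\int_s^v \!\!\int_X p(s,x,u,z)\,p(u,z,v,y)\,q(u,z)\,dz\,du = A + B,
\]
where $A$ denotes the contribution from $s<u<t$ and $B$ the contribution from $t<u<v$. For $A$, the time $u$ lies in $(s,t)$, but $p(u,z,v,y)$ ends at the wrong time $v$ instead of $t$; I would insert the intermediate time $t$ via Chapman--Kolmogorov, $p(u,z,v,y)=\int_X p(u,z,t,w)\,p(t,w,v,y)\,dw$, then apply Fubini to pull the $w$-integral outside. The inner double integral in $u$ and $z$ then has exactly the form of the left side of (\ref{eq:dI}) for the time interval $(s,t)$, with endpoints $x$ and $w$, so it is bounded by $I(s,t)\,p(s,x,t,w)$. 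One more application of Chapman--Kolmogorov collapses $\int_X p(s,x,t,w)\,p(t,w,v,y)\,dw$ to $p(s,x,v,y)$, giving $A\le I(s,t)\,p(s,x,v,y)$.

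For $B$, I would symmetrically expand $p(s,x,u,z)=\int_X p(s,x,t,w)\,p(t,w,u,z)\,dw$, pull the $w$-integral outside, recognize the inner integral over $u\in(t,v)$ and $z\in X$ as bounded by $I(t,v)\,p(t,w,v,y)$, and again use Chapman--Kolmogorov to obtain $B\le I(t,v)\,p(s,x,v,y)$. Adding the two bounds yields
\[
\int_s^v \!\!\int_X p(s,x,u,z)\,p(u,z,v,y)\,q(u,z)\,dz\,du \le \bigl(I(s,t)+I(t,v)\bigr)\,p(s,x,v,y),
\]
for all $x,y\in X$, and by definition of $I(s,v)$ as the smallest constant with this property the claim follows.

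There is really no hard step here: the proof is purely a bookkeeping exercise using nonnegativity of the integrand (to justify Fubini) and two applications of Chapman--Kolmogorov. The only thing to be careful about is choosing which factor to expand in each of the two pieces so that the result of applying the definition of $I$ leaves a product that reassembles into $p(s,x,v,y)$ via (\ref{eq:ChK}). Also note that $I(s,t)$ and $I(t,v)$ could a priori be $+\infty$, in which case the inequality is trivial; this is the reason the lemma is phrased as an inequality rather than used to directly conclude finiteness, although, combined with the relative boundedness hypothesis, iterating this subadditivity does propagate finiteness from short intervals to arbitrary intervals as the remark preceding the lemma asserts.
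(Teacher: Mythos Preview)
Your proof is correct and follows essentially the same argument as the paper: split the time integral at $t$, expand the ``wrong-endpoint'' factor in each piece via Chapman--Kolmogorov, apply the definition of $I$ on the appropriate subinterval, and reassemble $p(s,x,v,y)$. The paper presents this slightly more tersely but the logic is identical.
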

\begin{proof}
Let $s<t<v$ and $x,y\in  X$. We have
\begin{eqnarray*}
&& \int_s^v \int_{ X}
p(s,x,u,z)p(u,z,v,y)\,q(u,z)\,dzdu
= \int_s^t+\int_t^v\\
&=&
\int_s^t\int_{ X}\int_ X p(s,x,u,z)p(u,z,t,w)p(t,w,v,y)dw\,q(u,z)\,dzdu\\
&&+
\int_t^v\int_{ X}\int_ X p(s,x,t,w)p(t,w,u,z)p(u,z,v,y) dw\,q(u,z)\,dzdu\\
&\leq&
\left[I(s,t)
+
I(t,v)\right]\int_{ X} p(s,x,t,w)p(t,w,v,y)dw\\ 
&=&\left[I(s,t)+I(t,v)\right]p(s,x,v,y)\,.  
\end{eqnarray*}
\end{proof}

This {\it subadditivity} and the relative boundedness yield the
following. 
\begin{lem}\label{lem:intercept}
If $\eta>\eta^*(q)$, then there is $\beta\geq 0$ such
that 
\begin{equation}\label{con:A1}
\int_s^t \int_{ X}
 p(s,x,u,z)p(u,z,t,y) \,q(u,z)\,dzdu
\le [\eta + \beta(t-s)]\,p(s,x,t,y)\,,
\end{equation} 
whenever $s<t\in \RR$ and $x,y\in  X$. 
\end{lem}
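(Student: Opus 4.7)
The plan is to combine the subadditivity of $I(s,t)$ proved in Lemma \ref{lem:sa} with the small-time control guaranteed by the definition of $\eta^*(q)$. Recall that (\ref{con:A1}) is equivalent to the statement $I(s,t)\le \eta+\beta(t-s)$ for all $s<t$, so it suffices to establish this bound on $I$.

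First, since $\eta>\eta^*(q)$, the definition (\ref{eq:des}) together with (\ref{con:scp}) provides an $h>0$ such that
\[
I(s,t)\le \eta\quad\text{whenever }0<t-s\le h.
\]
Next, for arbitrary $s<t$, set $n=\lceil (t-s)/h\rceil\ge 1$ and partition $[s,t]$ into $n$ consecutive subintervals of equal length $(t-s)/n\le h$; call the endpoints $s=t_0<t_1<\cdots<t_n=t$. An immediate induction on $n$, based on Lemma \ref{lem:sa}, gives the $n$-fold subadditivity
\[
I(s,t)=I(t_0,t_n)\le \sum_{i=1}^{n} I(t_{i-1},t_i).
\]
Since each term on the right is bounded by $\eta$, we obtain $I(s,t)\le n\eta$.

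Finally, using $n\le (t-s)/h+1$, this yields
\[
I(s,t)\le \eta+\frac{\eta}{h}(t-s),
\]
so we may take $\beta=\eta/h$. There is no real obstacle here: the only subtlety is to handle uniformly both the case $t-s\le h$ (where $n=1$ and the bound $I(s,t)\le \eta$ suffices) and the case $t-s>h$, which the choice $n=\lceil (t-s)/h\rceil$ accomplishes in one stroke. The key conceptual point, already isolated in Lemma \ref{lem:sa}, is that the Chapman--Kolmogorov equation (\ref{eq:ChK}) lets one insert an intermediate integration in $w$ and bound the $q$-integral on each subinterval against $I(t_{i-1},t_i)\,p(t_{i-1},\cdot,t_i,\cdot)$ inside the remaining $p$'s, producing a clean telescoping to $p(s,x,t,y)$.
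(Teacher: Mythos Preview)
Your proof is correct and follows essentially the same route as the paper: choose $h$ from the definition of $\eta^*(q)$, apply the subadditivity of Lemma~\ref{lem:sa} to a partition of $[s,t]$ into $\lceil (t-s)/h\rceil$ pieces, and conclude with $\beta=\eta/h$. The paper phrases the count as the natural number $k$ with $s+(k-1)h<t\le s+kh$, but this is the same $\lceil (t-s)/h\rceil$ you use.
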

\begin{proof}
 For $\eta>\eta^*(q)$ let $h>0$ be such as required in the paragraph between 
the definition (\ref{eq:des}) and the inequality (\ref{con:scp}). 
If  $k$ is a natural number and 
$s+(k-1)h<t\leq s+kh$, 
then $k<1+(t-s)/h$, and, by Lemma~\ref{lem:sa},
$I(s,t)\leq k\eta\leq \eta+\eta(t-s)/h$. We can take 
\begin{equation}
  \label{eq:ffb}
\beta=\eta/h\,.  
\end{equation}
\end{proof}
Conversely, if (\ref{con:A1}) holds with some finite $\eta$ and
$\beta$, then $q$ is relatively bounded, and $\eta^*(q)\leq \eta$.
Also, (\ref{con:A1}) with $0\leq \eta<1$ (and some finite $\beta$) characterizes relative
smallness, and (\ref{con:A1}) being true for {\it every} $\eta\geq 0$ (with some
finite $\beta$) is equivalent to the 
 relative Kato condition.
Thus, our focus in (\ref{con:A1}) is on the value of $\eta$; the term $\beta(t-s)$ is merely a technically convenient
replacement of $h$.

In the remainder of the section we will assume that (\ref{con:A1}) 
holds with $0\leq \eta<1$ and (finite) $\beta\geq 0$. 
For instance, every bounded (nonnegative) $q$ satisfies the assumption with
$\eta=0$ and $\beta=\sup_{\RR\times X} q(u,z)$.  Indeed,
$$
\int_s^t \int_{ X}
 p(s,x,u,z)p(u,z,t,y) \,q(u,z)\,dzdu
\le 
[\sup q]
(t-s)p(s,x,t,y)\,.
$$

We shall need the following identity. 

\begin{lem}\label{lem:bc}
For all $n=0,1,\ldots$ and $\xi,\nu,\eta \in \RR$, 
\begin{equation}\label{eq:3sum}
\sum_{m=0}^{n} \sum_{k=0}^{n-m}\sum_{j=0}^m 
\binom{n-m}{k}\binom{m}{j} \frac{\xi^k \nu^j}{k!j!}\eta^{n-k-j} 
= \sum_{r=0}^n \binom{n+1}{r+1} \frac{(\xi + \nu)^r}{r!} \eta^{n-r}\,.
\end{equation} 
\end{lem}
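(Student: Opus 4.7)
The plan is to switch the order of summation so that $r=k+j$ becomes the outer index, then recognize the inner sum in $m$ as a Chu--Vandermonde convolution, and finish with the binomial theorem applied to $(\xi+\nu)^r$.

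First, I would regroup the triple sum by the value of $r=k+j$. For each $(k,j)$ with $k,j\geq 0$ and $k+j=r$ the constraints on $m$ become $j\le m\le n-k$, and $\eta$ appears with exponent $n-k-j=n-r$. This rewrites the left-hand side as
$$\text{LHS}=\sum_{r=0}^{n}\eta^{n-r}\sum_{\substack{k+j=r\\ k,j\geq 0}}\frac{\xi^k\nu^j}{k!\,j!}\sum_{m=j}^{n-k}\binom{n-m}{k}\binom{m}{j}.$$

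Next, I would evaluate the innermost sum in $m$. After the substitution $m'=m-j$ with $N=n-k-j$ it becomes $\sum_{m'=0}^{N}\binom{m'+j}{j}\binom{N-m'+k}{k}$, which is the Chu--Vandermonde convolution
$$\sum_{m'=0}^{N}\binom{m'+a}{a}\binom{N-m'+b}{b}=\binom{N+a+b+1}{a+b+1}$$
taken at $a=j$, $b=k$, and therefore equals $\binom{n-k-j+j+k+1}{j+k+1}=\binom{n+1}{r+1}$. Finally, the binomial theorem yields $\sum_{k+j=r}\xi^k\nu^j/(k!\,j!)=(\xi+\nu)^r/r!$, and assembling these three reductions produces exactly the right-hand side of \eqref{eq:3sum}.

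The only step that is not routine index bookkeeping is recognizing the Chu--Vandermonde identity in the shape that emerges after the shift $m'=m-j$; once that is in place the remainder of the argument is a short algebraic reorganization and the scalars $\xi$, $\nu$, $\eta$ play no active role beyond carrying their exponents.
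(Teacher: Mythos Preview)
Your proof is correct and follows essentially the same route as the paper: regroup by $r=k+j$, evaluate the inner sum over $m$ as $\binom{n+1}{r+1}$, and finish with the binomial theorem. The only difference is cosmetic---the paper justifies the identity $\sum_{m=j}^{n-k}\binom{m}{j}\binom{n-m}{k}=\binom{n+1}{r+1}$ by a direct counting argument (fixing the $(j+1)$-st smallest element of an $(r+1)$-subset of $\{0,\dots,n\}$), whereas you invoke Chu--Vandermonde after the shift $m'=m-j$.
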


\begin{proof} 
Let $j,r$ be integers, $0\le j\le r\le n$. There are $\binom{n+1}{r+1}$
subsets of $\{0,1,\dots,n\}$ having $r+1$ elements. Such a subset, 
$\{i_1,i_2,\dots,i_{r+1}\}$ with $i_1<i_2<\dots <i_{r+1}$, may be
chosen by first fixing $m:=i_{j+1}\in\{j,j+1,\dots,n-r+j\}$ and then
taking $j$ integers $0\le i_1<\dots<i_j<m$, and $r-j$ integers
$m<i_{j+2}<\dots<i_{r+1}\le n$. This shows that (for every such $j$)
$$
\sum_{j\leq m\leq n-r+j} 
\binom{m}{j} \binom{n-m}{r-j}=\binom{n+1}{r+1}\,
$$
(compare \cite[(5.26)]{MR1397498}).
Considering $k = r-j$ we see that
the conditions $0\leq r\leq n$, $0\leq j\leq r$, $j\leq m \leq
n-r+j$ are equivalent to $0\leq m\leq n$, $0\leq k \leq n-m$, 
$0\leq j\leq m$. Therefore
\begin{eqnarray*}
&&\sum_{m=0}^{n} \sum_{k=0}^{n-m}\sum_{j=0}^m 
\binom{n-m}{k}\binom{m}{j} \frac{\xi^k \nu^j}{k!j!}\eta^{n-k-j} \\
&=&\sum_{r=0}^{n} \sum_{j=0}^{r}\sum_{m=j}^{n+j-r}
\binom{m}{j} \binom{n-m}{r-j}
\frac{\nu^j \xi^{r-j}}{j!(r-j)!}\eta^{n-r}\\
&=&\sum_{r=0}^{n} \binom{n+1}{r+1} 
\frac{(\nu+\xi)^{r}}{r!}\eta^{n-r}\,.
\end{eqnarray*}
\end{proof}
The following result is our main technical observation,
validating the worth of the assumption (\ref{con:A1}).

\begin{lem}\label{technical}
For all $n=0,1,2,\dots$, $x,y \in  X$ and $s,t \in \RR$, 
\begin{equation}
  \label{eq:oopn}
p_n(s,x,t,y)
\leq p(s,x,t,y)\sum_{k=0}^n \binom{n}{k} \frac{(\beta(t-s))^k}{k!}\eta^{n-k}\,.
\end{equation}
\end{lem}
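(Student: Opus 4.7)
I would argue by induction on $n$. The base case $n=0$ reduces to the trivial equality $p_0 = p$. For the inductive step, the essential tool is a symmetrization afforded by Lemma~\ref{lem:18.76}: since identity (\ref{eq:18.76}) for $p_n$ holds for every $m = 0, 1, \ldots, n-1$, summing these $n$ copies yields
\begin{equation*}
n\cdot p_n(s,x,t,y) = \sum_{m=0}^{n-1} \int_s^t \int_X p_{n-1-m}(s,x,u,z)\, p_m(u,z,t,y)\, q(u,z)\,dz\,du.
\end{equation*}

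Next, I would apply the inductive hypothesis to both factors $p_{n-1-m}(s,x,u,z)$ and $p_m(u,z,t,y)$. After exchanging summation and integration, the resulting triple sum in indices $m, k, j$ is exactly the left-hand side of Lemma~\ref{lem:bc}, with $\xi = \beta(u-s)$, $\nu = \beta(t-u)$, and upper index $n-1$. The key observation is that the right-hand side of Lemma~\ref{lem:bc} depends only on $\xi + \nu = \beta(t-s)$, so the entire triple sum collapses to the $u$-independent expression $\sum_{r=0}^{n-1} \binom{n}{r+1} \frac{(\beta(t-s))^r}{r!}\eta^{n-1-r}$ and can be pulled outside the integral. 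The remaining single integral is then controlled directly by the standing assumption (\ref{con:A1}), contributing the factor $[\eta + \beta(t-s)]\, p(s,x,t,y)$.

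What is left is a purely algebraic inequality: after dividing by $n$, one must verify
\begin{equation*}
\frac{1}{n}\bigl[\eta + \beta(t-s)\bigr] \sum_{r=0}^{n-1} \binom{n}{r+1} \frac{(\beta(t-s))^r}{r!}\eta^{n-1-r} \leq \sum_{k=0}^n \binom{n}{k} \frac{(\beta(t-s))^k}{k!}\eta^{n-k}.
\end{equation*}
Comparing coefficients of $\frac{(\beta(t-s))^k}{k!}\eta^{n-k}$, this reduces to $\binom{n}{k+1} + k\binom{n}{k} \leq n \binom{n}{k}$ for $0 \leq k \leq n$, which follows at once from $\binom{n}{k+1} = \frac{n-k}{k+1}\binom{n}{k}$ and $k^2 \leq nk$. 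I expect the main subtlety to be the symmetrization itself: a naive induction using only $m = 0$ in Lemma~\ref{lem:18.76} produces a bound whose top-degree coefficient exceeds $\binom{n}{n}$, so both the full family of identities in Lemma~\ref{lem:18.76} and the combinatorial identity of Lemma~\ref{lem:bc} are essential to the argument.
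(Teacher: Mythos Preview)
Your proof is correct and follows essentially the same approach as the paper: the paper also symmetrizes via Lemma~\ref{lem:18.76} (writing $(n+1)p_{n+1}$ as the sum over $m=0,\dots,n$, which is your argument with the index shifted by one), applies the inductive hypothesis to both factors, invokes Lemma~\ref{lem:bc} to collapse the triple sum to a $u$-independent expression, and then uses (\ref{con:A1}) together with the same elementary coefficient comparison $\binom{n}{k+1}+k\binom{n}{k}\le n\binom{n}{k}$ to finish.
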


\begin{proof}
Of course, (\ref{eq:oopn}) holds for $n=0$.
By Lemma~\ref{lem:18.76},
induction, Lemma~\ref{lem:bc}, and (\ref{con:A1}),
\begin{eqnarray*}
&&(n+1)p_{n+1}(s,x,t,y)  \\
&=& \sum_{m=0}^{n}  \int_s^t\int_{ X}   p_{n-m}(s,x,u,z)p_{m}(u,z,t,y)\,q(u,z)\,dzdu \\
& \le & \int_s^t \int_{ X}  p(s,x,u,z)p(u,z,t,y)q(u,z)\,dzdu \\
&&\sum_{m=0}^{n} \sum_{k=0}^{n-m} \sum_{j=0}^m 
\binom{n-m}{k} \binom{m}{j}
\frac{(\beta(u-s))^k}{k!}\eta^{n-m-k}  
\frac{(\beta(t-u))^j}{j!}\eta^{m-j} \\
& = & \int_s^t \int_{ X}  p(s,x,u,z)p(u,z,t,y)\,q(u,z)\,dzdu 
\sum_{r=0}^n \binom{n+1}{r+1} \frac{(\beta(t-s))^r}{r!} \eta^{n-r}\\
& \le & p(s,x,t,y)[\eta + \beta(t-s)]\sum_{r=0}^n \binom{n+1}{r+1} \frac{(\beta(t-s))^r}{r!}\eta^{n-r}\\
& = & p(s,x,t,y)
\left[ \sum_{r=0}^n \binom{n+1}{r}
\frac{n+1-r}{r+1} \frac{(\beta(t-s))^r}{r!}\eta^{n+1-r}\right.\\
&&  \qquad\qquad\qquad\qquad\qquad\qquad\qquad
+ \left.\sum_{r=1}^{n+1}
 \binom{n+1}{r}r \frac{(\beta(t-s))^r}{r!}\eta^{n+1-r}\right]\\
&\le& (n+1) p(s,x,t,y)\sum_{r=0}^{n+1} \binom{n+1}{r} \frac{(\beta(t-s))^r}{r!}\eta^{n+1-r}\,.
\end{eqnarray*} 
\end{proof}

\begin{thm}\label{th:optyl}
If $q$ satisfies {\rm(\ref{con:A1})} with $\eta<1$, then,
 for all $s<t$ and $x,y\in  X$,
  \begin{equation}
    \label{eq:optyl}
\tilde{p}_q(s,x,t,y) \leq \frac{1}{1-\eta}
\exp\left(\frac{\beta}{1-\eta}(t-s)\right)\,
p(s,x,t,y)\,.
  \end{equation}
\end{thm}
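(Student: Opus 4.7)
The plan is to invoke Lemma \ref{technical} and then evaluate the resulting double sum by a standard generating function identity. Since every term is nonnegative, all rearrangements are fully justified by Tonelli's theorem.

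First I would sum the bound from Lemma~\ref{technical} over $n\geq 0$, obtaining
$$
\tilde{p}_q(s,x,t,y) \;\leq\; p(s,x,t,y)\sum_{n=0}^\infty\sum_{k=0}^n \binom{n}{k}\frac{(\beta(t-s))^k}{k!}\,\eta^{n-k}.
$$
Next I would swap the order of summation by setting $m=n-k$, so that $k$ and $m$ range independently over the nonnegative integers and $\binom{n}{k}=\binom{k+m}{k}$. This converts the double sum into
$$
\sum_{k=0}^\infty \frac{(\beta(t-s))^k}{k!}\sum_{m=0}^\infty \binom{k+m}{k}\eta^{m}.
$$

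The key identity at this point is the negative binomial series
$$
\sum_{m=0}^\infty \binom{k+m}{k}\eta^{m}=\frac{1}{(1-\eta)^{k+1}},
$$
valid for $|\eta|<1$, which is exactly our standing assumption $\eta<1$. Substituting this into the outer sum yields
$$
\frac{1}{1-\eta}\sum_{k=0}^\infty \frac{1}{k!}\left(\frac{\beta(t-s)}{1-\eta}\right)^{\!k}=\frac{1}{1-\eta}\exp\!\left(\frac{\beta(t-s)}{1-\eta}\right),
$$
which is precisely the claimed bound (\ref{eq:optyl}).

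There is no substantive obstacle here: the combinatorial heavy lifting was already carried out in Lemma~\ref{lem:bc} and absorbed into the inductive bound of Lemma~\ref{technical}. The only thing to be careful about is the convergence of the inner geometric-type series, which is exactly why the hypothesis $\eta<1$ (rather than mere relative boundedness) is essential; with $\eta\geq 1$ the series $\sum_m \binom{k+m}{k}\eta^m$ diverges and the perturbation series need not converge in finite time.
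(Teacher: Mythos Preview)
Your proof is correct and essentially identical to the paper's: both sum Lemma~\ref{technical} over $n$, interchange the order of summation, and use $\sum_{m\ge 0}\binom{k+m}{k}\eta^m=(1-\eta)^{-(k+1)}$ to recognize the exponential series. The only cosmetic difference is that the paper derives this identity by $k$-fold differentiation of the geometric series rather than citing the negative binomial expansion.
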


\begin{proof}
By (\ref{eq:defk}) and Lemma \ref{technical},
\begin{eqnarray*}
\tilde{p}_q(s,x,t,y)
&\leq & p(s,x,t,y)
\sum_{k=0}^\infty \frac{(\beta(t-s))^k}{k!}
\sum_{n=k}^\infty \binom{n}{k}\eta^{n-k}\,,
\end{eqnarray*}
where
$$
\sum_{n=k}^\infty \binom{n}{k}\eta^{n-k}=
\frac{1}{k!}\frac{d^k}{d\eta^k}\sum_{n=0}^\infty \eta^{n}=
\frac{1}{k!}\frac{d^k}{d\eta^k}\frac{1}{1-\eta}
=\frac{1}{k!}\frac{k!}{(1-\eta)^{k+1}}
=\frac{1}{(1-\eta)^{k+1}}\,.
$$
Therefore
\begin{eqnarray}
\tilde{p}_q(s,x,t,y)
&\leq& p(s,x,t,y)
\frac{1}{1-\eta}
\sum_{k=0}^\infty \frac{(\beta(t-s)/(1-\eta))^k}{k!}\label{eq:lcp} \\
&=& 
\frac{1}{1-\eta}
\exp\left(\frac{\beta}{1-\eta}(t-s)\right)\,\nonumber
p(s,x,t,y)\,.
\end{eqnarray}
\end{proof}

\section{Small signed perturbations}
\label{sec:gp}
We will present some immediate consequences of
Theorem~\ref{th:optyl} for {\it signed} $q$. Let $q_+=\max(q,0)$,
$q_-=\max(-q,0)$, so that 
$
q=q_+-q_- 
$.  
It will be convenient to consider the following integral kernels 
on space-time $\RR\times X$
: 
$$
  Pf(s,x):=P_pf(s,x):=\int_\RR \int_ X  p(s,x,u,z)f(u,z)\,dzdu\,,
$$
\begin{equation}
  \label{eq:dpik}
  P^q f(s,x):=P_p^qf(s,x):=\int_\RR \int_ X  p(s,x,u,z)f(u,z)q(u,z)\,dzdu\,.
\end{equation}
For $t\in \RR$ and $y \in  X$ we have
\begin{eqnarray*}
\left(P^q p(\cdot,\cdot,t,y)\right)(s,x) & = & \int_\RR \int_{ X} p(s,x,u,z) p(u,z,t,y) \,q(u,z)\,dzdu \\
& = & p_1(s,x,t,y)\,,\quad s\in \RR\,,\;x\in  X\,.
\end{eqnarray*}
Using Lemma~\ref{lem:18.76} we obtain by induction that,
for every natural $n$, 
$$
\left((P^q)^n p(\cdot,\cdot,t,y)\right)(s,x)
=P^q(p_{n-1}(\cdot,\cdot,t,y))(s,x)
= p_n(s,x,t,y)\,.
$$
Relaxing notation, we can write 
$p_n(s,x,t,y)=(P^q)^n p(s,x,t,y)$, or even $p_n=(P^q)^np$.
In view of (\ref{eq:defk}) we define (for signed $q$)
\begin{equation}
  \label{eq:dpp}
  \tilde{p}_q=\sum_{n=0}^\infty (P^q)^np\,,
\end{equation}
whenever the integrals and the sum are nonnegative or absolutely
convergent. 
Given (signed) $q_1$ and $q_2$ we consider the effect of two consecutive perturbations,
$$
\widetilde{(\tilde{p}_{q_1})}_{q_2}:=\sum_{n=0}^\infty (P_{\tilde{p}_{q_1}}^{q_2})^n\tilde{p}_{q_1}\,.
$$
Here $P_{\tilde{p}_{q_1}}^{q_2}f(s,x)=\int_\RR \int_ X  \tilde{p}_{q_1}(s,x,u,z)f(u,z)q_2(u,z)\,dzdu$,
compare (\ref{eq:dpik}).
The following is a special case of a
general result on perturbations of integral kernels
(see, e.g., \cite[Problem 1.13]{MR758799}).
\begin{lem}\label{lem:aep}
Under the assumptions of nonnegativity or absolute convergence,  
\begin{equation}
    \label{eq:aep}
\widetilde{(\tilde{p}_{q_1})}_{q_2}=
\tilde{p}_{q_1+q_2}\,.      
  \end{equation}
\end{lem}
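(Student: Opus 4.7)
The plan is to expand both sides of (\ref{eq:aep}) as sums indexed by finite words in the alphabet $\{q_1,q_2\}$, and to exhibit a term-by-term identification.

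For a word $w=(r_1,\ldots,r_N)$ with $r_i\in\{q_1,q_2\}$ (and $N\geq 0$, the empty word giving $w=\emptyset$), set
\[
 p_w(s,x,t,y) := \bigl[P_p^{r_1} P_p^{r_2}\cdots P_p^{r_N}\,p(\cdot,\cdot,t,y)\bigr](s,x),\qquad p_\emptyset := p,
\]
which is precisely the type of iterated integral appearing in Section~\ref{sec:al}. Since $q\mapsto P_p^q$ is linear, $P_p^{q_1+q_2}=P_p^{q_1}+P_p^{q_2}$, and distributing $(P_p^{q_1+q_2})^N$ yields
\[
 \tilde{p}_{q_1+q_2}=\sum_{N=0}^\infty (P_p^{q_1+q_2})^N p = \sum_{N=0}^\infty \sum_{w\in\{q_1,q_2\}^N} p_w.
\]
On the other side, by definition $\tilde{p}_{q_1}=\sum_{k\geq 0}p_{(q_1^k)}$, where $q_1^k$ denotes $k$ consecutive copies of $q_1$. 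A direct computation, analogous to the derivation of Lemma~\ref{pmnm}, gives $P_{\tilde{p}_{q_1}}^{q_2}p_v=\sum_{k\geq 0}p_{(q_1^k,\,q_2,\,v)}$ for every word $v$, so iterating $n$ times and applying the resulting operator to $\tilde{p}_{q_1}$ produces
\[
 (P_{\tilde{p}_{q_1}}^{q_2})^n\,\tilde{p}_{q_1} = \sum_{k_0,\ldots,k_n\geq 0} p_{(q_1^{k_0},q_2,q_1^{k_1},q_2,\ldots,q_2,q_1^{k_n})},
\]
i.e.\ the sum of $p_w$ over all words $w$ containing exactly $n$ letters equal to $q_2$.

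Every word in $\{q_1,q_2\}^*$ decomposes uniquely as $q_1^{k_0}q_2q_1^{k_1}\cdots q_2q_1^{k_n}$ for some $n\geq 0$ and $k_0,\ldots,k_n\geq 0$, so summing the previous display over $n$ recovers precisely $\sum_w p_w$, matching the expansion of $\tilde{p}_{q_1+q_2}$. The one point still to be checked is that these rearrangements of multiple sums of multiple integrals are legitimate. When $q_1,q_2\geq 0$ every summand and integrand is nonnegative and Tonelli's theorem makes every reordering valid; under the absolute convergence hypothesis, Fubini's theorem provides the same for signed $q_1,q_2$. I expect this bookkeeping to be the main delicate step, and once it is settled the term-by-term identification above establishes (\ref{eq:aep}).
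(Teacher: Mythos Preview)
Your argument is correct and is essentially the paper's own proof: the paper writes $P_{\tilde{p}_{q_1}}^{q_2}=\sum_{k\ge 0}(P^{q_1})^k P^{q_2}$ and then expands $\sum_{n\ge 0}\bigl(\sum_{k\ge 0}(P^{q_1})^k P^{q_2}\bigr)^n\sum_{l\ge 0}(P^{q_1})^l p$ into $\sum_{n\ge 0}\sum_{\sigma\in\{1,2\}^n}P^{q_{\sigma(1)}}\cdots P^{q_{\sigma(n)}}p$, which is exactly your sum over words together with the unique decomposition $w=q_1^{k_0}q_2\cdots q_2 q_1^{k_n}$. The only difference is notational --- you phrase the same combinatorics in terms of words $p_w$ rather than operator products --- and your explicit remark on Tonelli/Fubini for the rearrangement is a welcome addition.
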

\begin{proof}
It is not hard to verify that 
$$
    P_{\tilde{p}_{q_1}}^{q_2}=\sum_{k=0}^\infty (P^{q_1})^k P^{q_2}\,.
$$
It follows that
\begin{eqnarray*} 
\widetilde{(\tilde{p}_{q_1})}_{q_2}&=&\sum_{n=0}^\infty (P_{\tilde{p}_{q_1}}^{q_2})^n \tilde{p}_{q_1}
=\sum_{n=0}^{\infty} 
\left(\sum_{k=0}^\infty(P^{q_1})^kP^{q_2}\right)^n
\sum_{l=0}^\infty (P^{q_1})^l p\\
&=&\sum_{n=0}^{\infty} \sum_{\sigma\in \{1,2\}^n}
P^{q_{\sigma(1)}}\ldots P^{q_{\sigma(n)}}p=\tilde{p}_{q_1+q_2}.
\end{eqnarray*} 
\end{proof}

\begin{proof}[Proof of Theorem~\ref{Theorem1g}]
We assume that $\eta^*(|q|)<1$, in particular $\eta^*(q_-)<1$ and $\eta^*(q_+)<1$.  
By Theorem~\ref{th:optyl}, $\tilde{p}_{q_-}=\sum_{n=0}^\infty (P^{q_-})^np$ is 
convergent, hence
$$\tilde{p}_{-q_-}=\sum_{n=0}^\infty (-1)^n(P^{q_-})^np$$ is well defined
and $|\tilde{p}_{-q_-}|\leq \tilde{p}_{q_-}$. 
Therefore the arguments of Section~\ref{sec:al} apply,
in particular $\tilde{p}_{-q_-}$ satisfies the Chapman-Kolmogorov equation, 
see Lemma~\ref{t:chapkol}. We will prove that $\tilde{p}_{-q_-}\geq 0$.
Indeed, let $\eta^*(q_-)<\eta<1$ and let $h>0$ be as required
between (\ref{eq:des}) and (\ref{con:scp}), that is, 
for all $s<t\leq s+h$,
$$
P^{q_-}p(s,x,t,y)\leq \eta p(s,x,t,y)\leq p(s,x,t,y)\,,\quad x,y\in X\,.
$$
Then
$(P^{q_-})^{n+1} p(s,x,t,y)\leq \eta (P^{q_-})^{n} p(s,x,t,y)$ for
$n=1,2,\ldots$, and hence
\begin{equation}
  \label{eq:lbbp}
\tilde{p}_{-q_-}
=\left(p-P^{q_-}p\right)+\left((P^{q_-})^2 p-(P^{q_-})^3 p\right)+\ldots
\geq (1-\eta)p\,.
\end{equation}
In particular, $\tilde{p}_{-q_-}(s,x,t,y)\geq 0$ provided $s<t<s+h$.
By the 
Chapman-Kolmogorov equation, $\tilde{p}_{-q_-}(s,x,t,y)$ is nonnegative for
all times $s,t$. Also,
$$
\tilde{p}_{-q_-}
=p-\left(P^{q_-}p-(P^{q_-})^2 p\right)-\left((P^{q_-})^3p-(P^{q_-})^4 p\right)
+\ldots
\leq p\,,
$$
provided $s<t<s+h$. Therefore,
\begin{equation}
  \label{eq:npbbp}
\tilde{p}_{-q_-}(s,x,t,y)\leq p(s,x,t,y)\,,\quad \mbox{ for all } s,t\in
\RR\,,\; x,y\in X\,. 
\end{equation}
This further yields that the perturbation of $\tilde{p}_{-q_-}$ by $q_+$ is well
defined, and $\widetilde{(\tilde{p}_{-q_-})}_{q_+}\leq \tilde{p}_{q_+}$, compare (\ref{eq:18.75}).
On the other hand the (nonnegative) series defining $\tilde{p}_{|q|}$ 
is  convergent.  Thus, by
Lemma~\ref{lem:aep} and the considerations above, 
$$
\tilde{p}_{-q_-}\leq \widetilde{(\tilde{p}_{-q_-})}_{q_+}=\tilde{p}_q \leq \tilde{p}_{q_+}\,.
$$
We note that if $n$ is a natural number and 
$s<t<s+nh$, then Chapman-Kolmogorov, (\ref{eq:lbbp}),
Theorem~\ref{th:optyl}, and (\ref{eq:ffb}) yield that, for all $x,y\in X$,
\begin{equation}
  \label{eq:comp}
(1-\eta)^n \leq \frac{\tilde{p}_q(s,x,t,y)}{p(s,x,t,y)}\leq
\frac{1}{1-\eta}\exp\frac{n\eta }{1-\eta}\,.
\end{equation}
By Theorem~\ref{th:optyl} (applied to $|q|$), the  series
$\sum_{n=0}^\infty (P^q)^n p$ is  absolutely convergent
and hence
$$
(I-P^q)\tilde{p}_q=\sum_{n=0}^\infty (P^q)^n p-\sum_{n=0}^\infty (P^q)^{n+1} p=p\,.
$$
Thus $\tilde{p}=\tilde{p}_q$ solves (\ref{eq:df}).

To prove the uniqueness of the solution, let $\tilde{p}$ 
be any transition density which locally in time 
is comparable with $p$. Then the 
integral in (\ref{eq:df}) is absolutely convergent. 
This follows from Lemma~\ref{lem:intercept} (note that the domain of
integration in (\ref{eq:df}) is merely $(s,t)\times X$).
Therefore $(I-P^q)\tilde{p}=p$ and
$$
\tilde{p}=\sum_{n=0}^\infty (P^q)^n (I-P^q)\tilde{p}
=\sum_{n=0}^\infty (P^q)^np=\tilde{p}_q\,.
$$
If $\eta^*(|q|)=0$ then we can have $0<\eta<1$ arbitrarily
small in the above discussion, therefore $\tilde{p}_q$ and $p$ are
asymptotically equal  by (\ref{eq:comp}).
\end{proof}

Let $\tilde P=P_{\tilde p}$, that is $\tilde{P}f(s,x)=\int_\RR\int_X
\tilde{p}(s,x,u,z)f(u,z)\,dzdu$, where $\tilde p=\tilde{p}_q$. We like
to note that,
\begin{equation}
  \label{eq:dPt}
  \tilde{P}=\sum_{n=0}^\infty (P^q)^nP\,.
\end{equation}

Following \cite{MR1488344}, \cite{MR2395164}, 
we will say that $q\,: \RR \times  X\to \RR$ belongs to the
parabolic (space-time) {\em Kato class} for $p$, 
if  
\begin{equation}
  \label{eq:Kc1}
\lim_{h \to 0+} \sup_{s\in \RR\,,\;x \in  X} 
\int_ X 
\int_s^{s+h} 
p(s,x,u,z)
|q(u,z)|
\,dudz = 0\,,
\end{equation}
and
\begin{equation}  \label{eq:Kc2}
\lim_{h \to 0+} \sup_{t\in \RR\,,\;y \in  X} 
\int_ X \int_{t-h}^t 
p(u,z,t,y)
|q(u,z)|\,dudz = 0\,.
\end{equation}
We say that $p$ is {\it probabilistic} if
\begin{equation}\label{eq:subprob}
\int_X p(s,x,t,y)dy= 1\,,\quad \mbox{ for } s<t\,,\; x\in X\,.
\end{equation}
\begin{lem}\label{l:icn}
If $p$ is probabilistic, $p(s,x,t,y)=p(s,y,t,x)$ for $s,t\in \RR$, $x,y\in X$, and $q$
is relatively Kato for $p$, then $q$ is in the parabolic Kato class.
\end{lem}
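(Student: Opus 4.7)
The plan is to extract the two parabolic Kato conditions directly from the relative Kato inequality by integrating out the spatial variable using the probabilistic property of $p$, and to use the symmetry hypothesis only where needed. Fix $\eta > 0$. Since $\eta^*(|q|) = 0$, we may choose $h_0 > 0$ such that for all $x,y \in X$ and $s < t \leq s + h_0$,
\begin{equation}\label{eq:rKs}
\int_s^t \int_X p(s,x,u,z)\, p(u,z,t,y)\, |q(u,z)|\, dz\, du \leq \eta\, p(s,x,t,y).
\end{equation}

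First I would prove the forward condition (\ref{eq:Kc1}). Fix $s \in \RR$, $x \in X$, and $0 < h \leq h_0$, and let $t = s + h$. Integrating (\ref{eq:rKs}) in $y$ over $X$ and applying Tonelli yields
\[
\int_s^{s+h} \int_X p(s,x,u,z)\, |q(u,z)| \left( \int_X p(u,z,t,y)\, dy \right) dz\, du \leq \eta \int_X p(s,x,t,y)\, dy.
\]
By (\ref{eq:subprob}), both bracketed integrals equal $1$, so the left side reduces to $\int_s^{s+h} \int_X p(s,x,u,z)|q(u,z)|\,dz\,du \leq \eta$. Since $s$ and $x$ were arbitrary, taking the supremum and then $h \to 0^+$ (with $\eta$ arbitrary) gives (\ref{eq:Kc1}).

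For the backward condition (\ref{eq:Kc2}) I would instead integrate (\ref{eq:rKs}) in $x$. Fix $t \in \RR$, $y \in X$, $0 < h \leq h_0$, and set $s = t - h$. By Tonelli,
\[
\int_{t-h}^t \int_X p(u,z,t,y)\, |q(u,z)| \left( \int_X p(s,x,u,z)\, dx \right) dz\, du \leq \eta \int_X p(s,x,t,y)\, dx.
\]
Here the symmetry hypothesis enters: $\int_X p(s,x,u,z)\,dx = \int_X p(s,z,u,x)\,dx = 1$ by (\ref{eq:subprob}), and similarly $\int_X p(s,x,t,y)\,dx = 1$. Thus $\int_{t-h}^t \int_X p(u,z,t,y)|q(u,z)|\,dz\,du \leq \eta$ uniformly in $(t,y)$, which is (\ref{eq:Kc2}).

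There is really no hard step here; the only subtlety is recognizing that the forward condition follows from the probabilistic property alone (integrating out the endpoint $y$), while the backward condition additionally requires symmetry to turn the probabilistic normalization at the forward endpoint into a statement about integration over the starting variable $x$. Tonelli is justified throughout because $p$ and $|q|$ are nonnegative and measurable.
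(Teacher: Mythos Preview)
Your proof is correct and follows essentially the same approach as the paper: integrate the relative Kato inequality in $y$ (using the probabilistic property) to obtain (\ref{eq:Kc1}), and integrate in $x$ (using symmetry together with the probabilistic property) to obtain (\ref{eq:Kc2}). You have spelled out the role of symmetry and the use of Tonelli more explicitly than the paper does, but the argument is the same.
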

\begin{proof}
Let $h,\eta\geq 0$, and assume that for all $x,y\in \Rd$ and $s < t\leq s+h$,
$$
\int_s^t 
\int_{X}
p(s,x,u,z)p(u,z,t,y)|q(u,z)|\,dzdu
\le \eta \,p(s,x,t,y)\,.
$$
Integrating this with respect to $dy$ we obtain
$$
\int_s^t 
\int_{X}
p(s,x,u,z)|q(u,z)|\,dzdu
\le \eta\,.
$$
Thus, $\eta^*(|q|)=0$ yields (\ref{eq:Kc1}). Integrating with 
 respect to $dx$ instead, we obtain (\ref{eq:Kc2}).
\end{proof}
We note that Corollary~\ref{cor:equiv} below gives a special converse
to Lemma~\ref{l:icn}.

In view of Theorem~\ref{th:optyl}, the {\it relative Kato} condition
seems more intrinsic 
to Schr\"odinger
perturbations 
than the parabolic Kato condition 
(see \cite[Lemma 5.2]{MR2395164} and \cite[Theorem 3.10]{MR2253015}),
but the former may be more difficult to verify in specific situations.
For instance, the relative Kato condition for the Gaussian
kernel is rather difficult to explicitly characterize (as opposed to that for
the transition density of $\Delta^{\alpha/2}$, $0<\alpha<2$, see below).
It should be noted that each transition density $p$ determines a specific class of relatively
Kato functions $q$, and a detailed analysis is required to exhibit
interesting (unbounded) $q$.
We also remark that the relative Kato condition may be
interpreted as a Kato condition for {\it bridges}, see Section~\ref{sec:fd}.

\section{Examples}
\label{sec:ss}

In this section we assume that $X=\Rd$, $d\ge 1$, $\mathcal{M}$ is
 the $\sigma$-algebra of Borel subsets of $\Rd$, 
and $dz$ is the Lebesgue measure on $\Rd$. Let $\alpha \in (0,2)$. 
Our aim is to discuss the transition density, $\tilde p(s,x,t,y)$,
of $\Delta^{\alpha/2}+ q$, where $q$ is relatively small. 
To state our estimates of $\tilde{p}$, we specialize to
\begin{equation}
  \label{eq:pp}
p(s,x,t,y)=p_{t-s}(y-x)\,,
\end{equation}
where $x,y\in  \Rd$, $s<t$, and
$p_t$ is the smooth real-valued function on $\Rd$ determined by
\begin{equation}
  \label{eq:dpt}
  \int_ \Rd p_t(z)e^{iz\cdot\xi}\,dz=e^{-t|\xi|^\alpha}\,,\quad \xi\in
   \Rd\,, \quad t>0\,.
\end{equation}
In particular, for $\alpha=1$ we have
\begin{equation}
  \label{eq:tdCp}
p_t(z)=
\Gamma((d+1)/2)\pi^{-(d+1)/2}
\frac{t}{\big(|z|^2+t^2)^{(d+1)/2}}\,,  
\end{equation}
the Cauchy convolution semigroup.
Note that for every $\alpha\in (0,2)$,
\begin{equation}
  \label{eq:sca}
  p_t(z)=t^{-d/\alpha}p_1(t^{-1/ \alpha}z)\,,\quad t>0\,,\; z\in \Rd\,.
\end{equation}
This follows from (\ref{eq:dpt}).
We let $p(s,x,t,y)=0$ if $s\geq t$.
By the definitions, $p$~is time- and space-homogeneous: for all
$s,t,h\in\RR$, $x,y,z\in \Rd$ we have
$$p(s,x,t,y)=p(s+h,x+z,t+h,y+z)\,.$$
The semigroup $P_t f(x)=\int_ \Rd f(y) p_t(y-x) dy$
has $\Delta^{\alpha/2}$ as infinitesimal generator (\cite{BF}, \cite{Y},
\cite{MR1671973}, \cite{MR1873235}).  
Referring to Abstract, 
$p (s,x,t,y)$ is the fundamental solution of 
$\partial_t-\Delta_y^{\alpha/2}$ in the sense of distributions
(\cite{MR1671973}):
\begin{equation}
  \label{eq:fsol}
\int\limits_{\RR}\int\limits_{ \Rd}
p(s,x,t,y)\left[
\partial_t+\Delta^{\alpha/2}_y 
\right]\phi(t,y)\,dydt
 = -\phi(s,x)\,,
\end{equation}
where $s\in \RR$, $x\in  \Rd$, and $\phi\in C^\infty_c(\RR\times
\Rd)$. Here $C^\infty_c(\RR\times \Rd)$ is the class of all
infinitely differentiable compactly supported functions on $\RR\times
\Rd$, and 
\begin{eqnarray*}
   \Delta^{\alpha/2}\varphi(y) &=& \lim_{t \downarrow 0} \frac{ P_t\varphi(y) - \varphi(y)}{t} \\
   &=& 
\frac{2^{\alpha}\Gamma((d+\alpha)/2)}{\pi^{d/2}|\Gamma(-\alpha/2)|}
\lim_{\varepsilon \downarrow 0}\int_{\{|z|>\varepsilon\}}
   \frac{\varphi(y+z)-\varphi(y)}{|z|^{d+\alpha}}dz\,,\quad y\in \Rd\,.
   \end{eqnarray*}
A simple proof of (\ref{eq:fsol}) can be
given by using Fourier transform in the space variable, and (\ref{eq:dpt}) (we omit the details).

We will assume that $q$ is relatively small for $p$.
Let $\big(L\phi\big)(t,y)=\partial_t\phi(t,y)+\Delta^{\alpha/2}_y \phi(t,y)$.
We also introduce $\big(Q\phi\big)(t,y)=q(t,y)\phi(t,y)$, the operation of
multiplication by $q$.
Referring to our previous notation we have $P^q=PQ$, and 
(\ref{eq:dPt}) now reads
\begin{equation}
  \label{eq:dPtn}
  \tilde{P}=\sum_{n=0}^\infty (PQ)^nP\,.  
\end{equation}
We can interpret (\ref{eq:fsol}) as
\begin{equation}
  \label{eq:ifsol}
PL\phi=-\phi   \qquad (\phi\in C^\infty_c(\RR\times  \Rd))\,.
\end{equation}
This implies that 
\begin{equation}
  \label{eq:fsp}
  \tilde{P}(L+Q)\phi=-\phi \qquad (\phi\in C^\infty_c(\RR\times  \Rd))\,.
\end{equation}
Indeed, by (\ref{eq:ifsol}), 
\begin{eqnarray*}
\tilde{P}(L+Q)\phi&=&
\sum_{n=0}^\infty P(QP)^n(L+Q)\phi\\
&=&PL\phi+\sum_{n=1}^\infty (PQ)^nPL\phi+
\sum_{n=0}^\infty (PQ)^{n+1}\phi=-\phi\,.
\end{eqnarray*}
The associativity of operations involved, which we have used freely above, follows from Fubini's
theorem. Indeed, each $\phi(\cdot,\cdot)\in C^\infty_c(\RR\times  \Rd)$ is bounded by a constant
multiple of $p(\cdot,\cdot,t_0,y_0)$ for some $t_0\in \RR$, $y_0\in \Rd$, and our
remarks from the proof of Theorem~\ref{Theorem1g} apply.
This proves (\ref{eq:fsp}):
$$
\int\limits_{\RR}\int\limits_{ \Rd}
\tilde{p}(s,x,t,y)\big[
\partial_t\phi(t,y)+\Delta^{\alpha/2}_y \phi(t,y)+q(t,y)\phi(t,y)\big]\,dydt
 = -\phi(s,x)\,,
$$
where $s\in \RR$, $x\in  \Rd$, and $\phi\in C^\infty_c(\RR\times  \Rd)$.
In fact, $(s,x,t,y)\mapsto\tilde{p}(s,x,t,y)$ is continuous, except
when $s=t$. Indeed, continuity is first proved inductively for each $p_n$ in
(\ref{eq:defk}), by using an argument of uniform integrability. 
We omit the details of the proof, and refer the reader to
a similar argument in the proof of \cite[Lemma 14]{MR2283957}
(see also Lemma~\ref{l:icn} and (\ref{eq:oopn}) above).
The continuity of $\tilde{p}=\tilde{p}_q$ then follows from the locally (in time)
uniform convergence of the series in (\ref{eq:lcp}). 
\begin{proof}[Proof of Theorem~\ref{Theorem1}]
We consider the Gaussian transition density $g$ introduced 
in Section~\ref{sec:intro}. This corresponds to $\alpha=2$ in
(\ref{eq:pp}), and an analogue of (\ref{eq:fsol}) holds for $g$ and
the Laplacian with a similar proof. 
The above discussion of the fractional Laplacian applies also to the
Laplacian, provided $q$ is relatively small with respect to $g$.
\end{proof}
Apart from obvious similarities, there exist important differences between $p$ ($0<\alpha<2$)
and $g$ ($\alpha=2$). For instance the global decay of $p$
in space is qualitatively different from that of $g$. In fact we have the following
estimate of $p$ (cf. (\ref{eq:tdCp}) and see, e.g., \cite{BSS} for a proof).
\begin{lem}\label{l:ptxy}
There exists $c=c(d,\alpha)$ such that, for all $z\in  \Rd$, $t>0$,
$$
        c^{-1} \left(\frac{t}{|z|^{d+\alpha}} \land
        t^{-d/\alpha}\right) \le p_t(z) \le c
        \left(\frac{t}{|z|^{d+\alpha}} \land t^{-d/\alpha}\right)\,.
$$ 
\end{lem}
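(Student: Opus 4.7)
The plan is to reduce the two-sided estimate to a statement about $p_1$ alone, then treat small and large $|x|$ separately. By the scaling identity (\ref{eq:sca}), we have $p_t(z)=t^{-d/\alpha}p_1(t^{-1/\alpha}z)$, so it suffices to establish
\[
p_1(x)\asymp 1\wedge |x|^{-d-\alpha},\qquad x\in\Rd,
\]
with constants depending only on $d$ and $\alpha$. Indeed, inserting $x=t^{-1/\alpha}z$ gives $p_t(z)\asymp t^{-d/\alpha}\wedge (t/|z|^{d+\alpha})$, which is the content of the lemma.

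For $|x|$ bounded, I would argue as follows. Since $\alpha>0$, the function $\xi\mapsto e^{-|\xi|^\alpha}$ is in $L^1(\Rd)$, so by Fourier inversion $p_1$ is continuous, and
\[
p_1(x)\le p_1(0)=(2\pi)^{-d}\int_{\Rd} e^{-|\xi|^\alpha}\,d\xi<\infty.
\]
Strict positivity of $p_1$ on all of $\Rd$ follows either from the subordination representation against the Gaussian kernel (see below), which expresses $p_1$ as a positive mixture of strictly positive Gaussians, or from the fact that $\{p_t\}$ is a convolution semigroup with full support. Continuity and positivity together give $c_0\le p_1(x)\le C_0$ on $\{|x|\le 1\}$, matching the bound $1\wedge|x|^{-d-\alpha}\asymp 1$ in that region.

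For the tail $|x|\ge 1$, the efficient route is subordination: one writes
\[
p_1(x)=\int_0^\infty g_u(x)\,\eta(du),\qquad g_u(x)=(4\pi u)^{-d/2}e^{-|x|^2/(4u)},
\]
where $\eta$ is the law at time $1$ of the $\alpha/2$-stable subordinator, determined by $\int e^{-\lambda u}\eta(du)=e^{-\lambda^{\alpha/2}}$. The subordinator has density $\eta(du)/du$ which is bounded on $[1,\infty)$ and satisfies $\eta(du)/du\asymp u^{-1-\alpha/2}$ as $u\to\infty$, with a matching one-sided decay as $u\to 0^+$. Inserting these asymptotics and changing variable $u=|x|^2 v$, one estimates both ends of the integral; the dominant contribution comes from $u\asymp |x|^2$, producing the $|x|^{-d-\alpha}$ decay with matching constants from above and below.

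The main technical obstacle is the \emph{lower} bound $p_1(x)\ge c|x|^{-d-\alpha}$ for large $|x|$: the upper bound can be obtained cheaply from the Fourier inversion formula by splitting the $\xi$-integral into $|\xi|\le 1/|x|$ and $|\xi|\ge 1/|x|$ and using smoothness of $e^{-|\xi|^\alpha}$ away from the origin together with the H\"older singularity at $\xi=0$, but the matching lower bound requires tracking the precise contribution of that singularity (equivalently, the tail of the L\'evy measure $c|z|^{-d-\alpha}\,dz$ of $\Delta^{\alpha/2}$). The subordination argument sidesteps this by working with the strictly positive Gaussian and the known two-sided control of the subordinator density, and this is the path I would present — or, alternatively, simply refer to the proof in \cite{BSS} as the statement itself does.
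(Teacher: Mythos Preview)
The paper does not supply its own proof of this lemma: it simply states the estimate and refers the reader to \cite{BSS} (``see, e.g., \cite{BSS} for a proof''). So there is no in-paper argument to compare against; your sketch is strictly more than what the paper provides.

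Your outline is the standard one and is essentially correct: reduce by scaling (\ref{eq:sca}) to $p_1(x)\asymp 1\wedge |x|^{-d-\alpha}$, handle $|x|\le 1$ by continuity and strict positivity, and obtain the tail via the subordination representation $p_1(x)=\int_0^\infty g_u(x)\,\eta(du)$. Two small precisions are worth making if you write this out in full. First, the density of the $\alpha/2$-stable subordinator at time~$1$ is bounded on all of $(0,\infty)$ and decays super-exponentially as $u\to 0^+$ (roughly like $\exp(-c\,u^{-\alpha/(2-\alpha)})$), not merely ``one-sided''; this is what makes the small-$u$ part of the integral harmless for large $|x|$. Second, the two-sided tail $\eta'(u)\asymp u^{-1-\alpha/2}$ as $u\to\infty$ is exactly what produces the matching lower bound $p_1(x)\ge c|x|^{-d-\alpha}$ after the change of variable $u=|x|^2 v$: restricting to $v\in[1,2]$ already gives a contribution of order $|x|^{-d-\alpha}$. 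With these points made explicit, your argument is complete and is in fact one of the standard routes to the result cited by the paper.
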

This {\it power-type} asymptotics yields the following 3P Theorem 
(\cite{MR2283957}).
\begin{thm} \label{3PLemma}
  There exists a constant $c=c(d,\alpha)$ such that 
$$
        p(s,x,u,z) \land p(u,z,t,y) \le c p(s,x,t,y) \qquad
        \mbox{ for }\quad
        x,z,y \in  X,\;\; s,u,t\in \RR\,.\label{3P:ineq}
$$ 
\end{thm}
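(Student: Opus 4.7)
The plan is to reduce the 3P inequality to the equivalent statement about the density $p_t$, namely that there is $C=C(d,\alpha)$ with
\[
p_a(z-x)\wedge p_b(y-z)\le C\, p_{a+b}(y-x)\,,\qquad a,b>0,\;x,y,z\in\RR^d,
\]
where $a=u-s$ and $b=t-u$; if $s\ge u$ or $u\ge t$ the left-hand side vanishes by convention and there is nothing to prove. The key tool is Lemma~\ref{l:ptxy}, which tells us that up to a multiplicative constant $p_t(z)\asymp t/|z|^{d+\alpha}\wedge t^{-d/\alpha}$, so the two ``regimes'' of $p_t$ are the polynomial tail and the on-diagonal level. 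Set $R=|y-x|$, $S=|z-x|$, $T=|y-z|$, and note that the triangle inequality $R\le S+T$ forces $S\ge R/2$ or $T\ge R/2$.

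The argument then splits on which regime governs $p_{a+b}(y-x)$. First I treat the tail regime $R\ge(a+b)^{1/\alpha}$, where Lemma~\ref{l:ptxy} gives $p_{a+b}(y-x)\asymp(a+b)/R^{d+\alpha}$. If $S\ge R/2$, I bound
\[
p_a(z-x)\le c\,\frac{a}{S^{d+\alpha}}\le c\,2^{d+\alpha}\frac{a}{R^{d+\alpha}}\le c\,2^{d+\alpha}\frac{a+b}{R^{d+\alpha}}\le C\,p_{a+b}(y-x),
\]
and if instead $T\ge R/2$ the symmetric estimate yields the same bound for $p_b(y-z)$. In either case the minimum of the two is controlled.

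Second, in the on-diagonal regime $R<(a+b)^{1/\alpha}$, Lemma~\ref{l:ptxy} gives $p_{a+b}(y-x)\asymp(a+b)^{-d/\alpha}$. Here I exploit that at least one of $a,b$ is $\ge(a+b)/2$: if $a\ge(a+b)/2$ then $p_a(z-x)\le c\,a^{-d/\alpha}\le c\,2^{d/\alpha}(a+b)^{-d/\alpha}\le C\,p_{a+b}(y-x)$, and likewise with $b$ in the other subcase, again controlling the minimum. Combining the two regimes establishes the 3P inequality with a constant depending only on $d$ and $\alpha$.

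The only mildly delicate point is recognizing that the naive triangle-inequality argument alone is insufficient: choosing the ``large'' side of the triangle (via $S$ or $T\ge R/2$) handles the tail regime, but in the on-diagonal regime one must instead choose the ``large'' time (via $a$ or $b\ge(a+b)/2$). Once one notices that these two dichotomies handle the two regimes of $p_{a+b}$ separately, the proof is routine and all estimates reduce to the two-sided bounds of Lemma~\ref{l:ptxy}.
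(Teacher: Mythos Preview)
Your proof is correct. The paper does not actually prove Theorem~\ref{3PLemma}; it merely cites \cite{MR2283957} for the result, noting that the power-type asymptotics of Lemma~\ref{l:ptxy} yields it. Your argument supplies exactly the elementary verification one expects from that remark: the two-sided bounds of Lemma~\ref{l:ptxy} reduce the claim to comparing $t/|z|^{d+\alpha}\wedge t^{-d/\alpha}$ at the three space-time points, and your two dichotomies (the triangle inequality in space for the tail regime of $p_{a+b}$, and $a\vee b\ge(a+b)/2$ for the on-diagonal regime) handle this cleanly. The observation that when $s\ge u$ or $u\ge t$ the left-hand side vanishes by convention disposes of the degenerate cases, and your constants depend only on $d$ and $\alpha$ through the constant in Lemma~\ref{l:ptxy}.
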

For numbers $a,b\geq 0$ we have $ab = (a \vee b) (a \land b)$ and
$(a\vee b) \le a+b$. Therefore (\ref{3P:ineq}) yields
the following variant form with five occurrences of $p$:
\begin{equation}\label{5P:ineq}
p(s,x,u,z)p(u,z,t,y)\le c p(s,x,t,y) \big[p(s,x,u,z) + p(u,z,t,y)\big]\,.
\end{equation}
From this and Lemma~\ref{l:icn} we immediately obtain the following consequence.
\begin{cor}\label{cor:equiv}
For the transition density of the fractional
Laplacian $\Delta^{\alpha/2}$ with $0<\alpha<2$, the
parabolic Kato class equals the relative Kato class.
\end{cor}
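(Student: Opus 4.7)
The plan is to combine Lemma~\ref{l:icn} with the 5P inequality (\ref{5P:ineq}) to obtain both directions of the equivalence.

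First I would verify that the transition density $p$ of $\Delta^{\alpha/2}$ fits the hypotheses of Lemma~\ref{l:icn}. Setting $\xi=0$ in (\ref{eq:dpt}) gives $\int_{\Rd} p_t(z)\,dz=1$, so $p$ is probabilistic; and the Fourier symbol $\xi\mapsto e^{-t|\xi|^\alpha}$ being real and even forces $p_t$ to be even, hence $p(s,x,t,y)=p_{t-s}(y-x)=p_{t-s}(x-y)=p(s,y,t,x)$. Lemma~\ref{l:icn} then immediately gives one direction: any relatively Kato $q$ belongs to the parabolic Kato class.

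For the reverse direction, suppose $q$ lies in the parabolic Kato class. The idea is to divide the 5P inequality (\ref{5P:ineq}) by $p(s,x,t,y)$, giving
\[
\frac{p(s,x,u,z)\,p(u,z,t,y)}{p(s,x,t,y)}\le c\bigl[p(s,x,u,z)+p(u,z,t,y)\bigr].
\]
Multiplying by $|q(u,z)|$ and integrating over $z\in\Rd$ and $u\in(s,t)$ with $t-s\le h$ produces an upper bound of the form $c\,(A_h+B_h)$, where $A_h$ is the supremum in (\ref{eq:Kc1}) and $B_h$ is the supremum in (\ref{eq:Kc2}). The small observation that legitimizes this is that $s<u<t$ together with $t-s\le h$ forces both $u-s\le h$ and $t-u\le h$, so the two ``half-time'' integrals are uniformly controlled by the corresponding suprema. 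The parabolic Kato hypothesis then gives $A_h+B_h\to 0$ as $h\to 0^+$, so given any $\eta>0$ we can shrink $h$ to ensure the left-hand integral is at most $\eta\,p(s,x,t,y)$ uniformly in $x,y$ and in $s<t\le s+h$. This is precisely $\eta^*(|q|)=0$, so $q$ is relatively Kato.

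There is no genuine obstacle here; once (\ref{5P:ineq}) is invoked, the only point requiring a sentence of care is the uniformity in $s,t,x,y$ when passing from the pointwise 5P bound to the integral bound, which is handled by the elementary half-time observation above.
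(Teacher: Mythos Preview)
Your proof is correct and follows exactly the approach the paper indicates: the paper states the corollary as an immediate consequence of (\ref{5P:ineq}) and Lemma~\ref{l:icn}, and you have filled in precisely those details, including the verification that $p$ is probabilistic and symmetric and the half-time splitting that controls the two terms on the right of (\ref{5P:ineq}) by the suprema in (\ref{eq:Kc1}) and (\ref{eq:Kc2}).
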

For instance, if $\alpha<d$, then Lemma~\ref{l:ptxy} (see also (\ref{eq:sca})) yields
$$\int_0^{h} p_t(z)dt\approx |z|^{\alpha-d}\wedge
\big[h^2|z|^{-d-\alpha}\big]\,,\quad z\in \Rd\,,\; h>0\,.
$$
If $0<\varepsilon\leq \alpha$, $|q(u,z)|\leq
|z|^{-\alpha+\varepsilon}$, $s\in \RR$, $x\in \Rd$, and we let $h\to 0$, then
$$
\int_s^{s+h}  \int_\Rd  p(s,x,u,z) |q(u,z)| \,dzdu
\leq \int_\Rd |z|^{-\alpha+\varepsilon}\int_0^{h} p_t(z) \,dtdz \to 0\,.
$$
Therefore each such $q$ belongs to the parabolic Kato class, and so it is relatively
Kato. Let us note  that
the local (in time) comparability of the Schr\"odinger 
semigroups of the fractional Laplacian for $q$ in the
(parabolic) Kato class is a new result even in the autonomous case of \cite{MR1671973}.
We also refer the reader to \cite{MR1978999} for recent Gaussian results,
with a definition \cite[(1.2)]{MR1978999}, not unrelated to our
relative Kato condition (see also 
\cite[Definition 3.1]{MR1920109} and \cite[Lemma 5.2]{MR2395164}, \cite[Theorem 3.10]{MR2253015}).

For a study of other
consequences of the Kato condition for autonomous additive perturbations 
we refer the reader to \cite{MR1132313}, \cite{MR1825645}, \cite{MR2283957}.

\section{Further discussion}\label{sec:fd}
For the sake of clarity, let us add a comment on a lower bound 
in (\ref{eq:comp}). 
If $q$ is relatively Kato for $g$ (see Section~\ref{sec:intro}) then
for $\tilde{g}=\tilde{g}_q$ we  have
\begin{equation}
  \label{eq:optdK}
\frac{\tilde{g}_q(s,x,t,y)}{g(s,x,t,y)}\geq
\exp
\left(  
-\int_s^t\int_ X \frac{g(s,x,u,z)g(u,z,t,y)}{g(s,x,t,y)}q_- (u,z)\,dzdu
\right)\,.
\end{equation}
An analogous estimate holds for the transition 
density $p$ of the fractional Laplacian $\Delta^{\alpha/2}$.
These results follow from the fact that 
$$
R_\lambda:=(I+\lambda P^{q_-})^{-1}P^{q_-},\quad \lambda>0\,,
$$
is a sub-Markov resolvent (of $P^{q_-}$), a unique kernel satisfying
$$
  R_\lambda+\lambda P^{q_-} R_\lambda= P^{q_-}\,.
$$
For further background, 
 we refer the reader to \cite[7.2--7.7]{MR850715}.
By an argument of log-convexity (see, e.g., \cite{MR2207878} or \cite[p. 429]{MR1736524},
see also \cite[8.1-8.2]{MR850715}), we obtain (\ref{eq:optdK}). 
{Noteworthy, one only needs relative boundedness of $q_-$ to obtain
satisfactory lower bounds for $\tilde{p}$, as defined by (\ref{eq:df}).} 

We omit the
(standard) details for two reasons. Firstly, our emphasis in this
paper is on upper bounds, or non-explosion results. Secondly,
in view of possible generalizations mentioned below it seems
economical to postpone the full discussion to a later paper. 
We remark that, in principle, the lower bound (\ref{eq:optdK}) is well known 
(see, e.g., \cite{MR1329992}, \cite{MR1671973}).

We like to comment on possible and forthcoming generalizations of our results.
{
It is possible to extend the present results to more general integral
kernel or to {measures} (rather than
functions $q$, see \cite{MR2207878}, \cite{MR2319644}, \cite{MR2395164} for a related
study). In fact, considering $q(dudz)=\eta\delta_{u_0}(du)dz$, 
where $\eta\geq 1$ and $\delta_{u_0}$ is the probability measure 
concentrated in $u_0$, shows that $\tilde{p}_q$ may explode in finite time $u_0$.
}

The technique based on Theorem~\ref{th:optyl} applies to more general
{\it additive} perturbations (of the generator). 
{In studying these, one should attempt a natural and general description (in terms of $p$) of a
class of perturbations which lead to comparability
theorems.}
In this connection we refer to \cite{MR2316878} for a discussion of
{\it nonlocal perturbations} of the fractional Laplacian, and to \cite{MR2283957} for a
study of {\it gradient perturbations} of $\Delta^{\alpha/2}$ under the assumption that $1<\alpha<2$. 

{There is a deep well-known connection of Schr\"odinger operators
to the theory of multiplicative functionals of Markov processes, see,
e.g., \cite{MR2395164}. We
like to discuss the connection in the case of the Wiener process $Y$ in
$\Rd$, defined by the transition density $g$ of
Section~\ref{sec:intro} (note that $Y_t=B_{2t}$, where $B$ is the standard Brownian motion). 
Let $\EE_{s,x}$ and $\PP_{s,x}$ be respectively the expectation 
and the distribution of the process 
starting at the point $x\in \Rd$ at time $s\in \RR$, 
so that $\PP_{s,x}[Y_t\in A]=\int_Ag(s,x,t,y)\,dy$, 
where $Y_t$ is the canonical continuous coordinate process
evaluated at time $t>s$.
For $y\in \Rd$ we let $\EE_{s,x}^{t,y}$ and $\PP_{s,x}^{t,y}$ 
denote, respectively, the expectation and the distribution
of the process starting at $x$ at time $s$ and
conditioned to reach $y$ at time $t$  (Brownian bridge). 
The process is defined by the transition
probability density function
$r(u_1,z_1,u_2,z_2)=g(u_1,z_1,u_2,z_2)g(u_2,z_2,t,y)/g(u_1,z_1,t,y)$,
where $s\leq u_1<u_2<t$,
$z_1,z_2\in \Rd$. Thus, the finite dimensional distributions are given
by the density functions
\begin{equation}
  \label{eq:fdd}
\frac{g(s,x,u_1,z_1)g(u_1,z_1,u_2,z_2)\ldots g(u_n,z_n,t,y)}{g(s,x,t,y)}\,,
\end{equation}
and we have the following {\it disintegration} of $\PP_{s,x}$, 
\begin{eqnarray}
&&\PP_{s,x}\left(Y_{u_1}\in A_1\,,\ldots\,,Y_{u_n}\in A_n\,;\; Y_t\in B \right)\\\nonumber
&&= \int_B\PP_{s,x}^{t,y}\left(Y_{u_1}\in A_1\,,\ldots\,,Y_{u_n}\in
  A_n\right)g(s,x,t,y)\,dy\,.
\label{eq:disi} 
\end{eqnarray}
Here $x,z_1,\ldots,z_n, y\in \Rd$, $s\leq u_1<\ldots<u_n<t$, and $A_1, \ldots,
A_n\subset \Rd$ are Borelian.
Consider the {\it multiplicative functional} (\cite{MR1329992}) $e_q(s,t)=\exp(\int_s^t q(u,Y_u)\,du)$. We have
$$
\EE_{s,x}^{t,y} e_{q}(s,t)=\sum_{n=0}^\infty
\frac{1}{n!}\;\EE_{s,x}^{t,y}\;
\left(\int_s^t q(u,Y_u)\,du
\right)^n\,. 
$$
According to (\ref{eq:18.75}), and (\ref{eq:fdd}),  
\begin{eqnarray}
\EE_{s,x}^{t,y}
\;\int_s^t q(u,Y_u)\,du
&=&
\int_s^t\int_\Rd \frac{g(s,x,u,z) q(u,z)
  g(u,z,t,y)}{g(s,x,t,y)}\,dudz \label{eq:g1g}
\\
&=&\frac{g_1(s,x,t,y)}{g(s,x,t,y)}\,.\nonumber
\end{eqnarray}
Furthermore,
\begin{eqnarray*}
&&
\EE_{s,x}^{t,y}\;\frac{1}{2}\,
\left(\int_s^t q(u,Y_u)\,du\right)^2
= 
\EE_{s,x}^{t,y}\;
\int_s^t\int_u^t q(u,Y_u)q(v,Y_v)\,dvdu\\
&=&
\int_s^t\int_u^t \int_\Rd \int_\Rd 
\frac{g(s,x,u,z)g(u,z,v,w)g(v,w,t,y)}{g(s,x,t,y)}q(u,z)q(v,w)\,dwdz\,dvdu\\
&=&
\int_s^t\int_\Rd 
\frac{g(s,x,u,z)g_1(u,z,t,y)}{g(s,x,t,y)}q(u,z)\,dz\,du
=\frac{g_2(s,x,t,y)}{g(s,x,t,y)}\,.
\end{eqnarray*}
By induction, for every $n=0,1,\ldots$,
$$
\frac{1}{n!}\;
\EE_{s,x}^{t,y}\;
\left(\int_s^t q(u,Y_u)\,du
\right)^n =\frac{g_n(s,x,t,y)}{g(s,x,t,y)}\,,
$$
hence
\begin{equation}
  \label{eq:pqp}
\EE_{s,x}^{t,y} e_{q}(s,t)=\frac{\tilde{g}_q(s,x,t,y)}{g(s,x,t,y)}\,.
\end{equation}
We may interpret $\tilde{g}_q(s,x,t,y)/g(s,x,t,y)$ as the eventual inflation of mass of the
Brownian particle moving from $(s,x)$ to $(t,y)$. The mass grows
multiplicatively where $q>0$, and decreases where $q<0$. Thus we may
consider the results of the paper as uniform bounds for this mass.
 
The following example illustrates inequality (\ref{eq:optyl}).
For a general transition density $p$ we consider a function $q(u,z)=q(u)$, 
depending only on time and locally integrable in time. 
It easily follows from (\ref{eq:18.75}) 
that
$\tilde{p}_q(s,x,t,y)/p(s,x,t,y)=\exp\left(\int_s^tq(u)du\right)$. We like to note
that $\eta^*(|q|)=0$ in this example, whilst the emphasis in
Theorem~\ref{th:optyl} is on $\eta^*(|q|)>0$.

In view of (\ref{eq:g1g}) and (\ref{con:scp}),
the relative Kato class
may be considered as a Kato class for the {\it conditional} processes (bridges), see (\ref{eq:Kc1}, \ref{eq:Kc2}).
}

\noindent
{\bf Acknowledgment.} 
We thank Panki Kim, and the referees of the paper for
discussion, corrections and insightful suggestions. 
The results of this paper were presented at 2nd International Conference on
Stochastic Analysis and Its Applications, May 28-31, 2008 at Seoul
National University.
\bibliographystyle{abbrv}
\bibliography{nsp}

\def\polhk#1{\setbox0=\hbox{#1}{\ooalign{\hidewidth
  \lower1.5ex\hbox{`}\hidewidth\crcr\unhbox0}}}
  \def\polhk#1{\setbox0=\hbox{#1}{\ooalign{\hidewidth
  \lower1.5ex\hbox{`}\hidewidth\crcr\unhbox0}}}
\begin{thebibliography}{10}

\bibitem{BF}
C.~Berg and G.~Forst.
\newblock {\em Potential theory on locally compact abelian groups}.
\newblock Springer-Verlag, New York, 1975.

\bibitem{MR850715}
J.~Bliedtner and W.~Hansen.
\newblock {\em Potential theory}.
\newblock Universitext. Springer-Verlag, Berlin, 1986.
\newblock An analytic and probabilistic approach to balayage.

\bibitem{MR1671973}
K.~Bogdan and T.~Byczkowski.
\newblock Potential theory for the {$\alpha$}-stable {S}chr\"odinger operator
  on bounded {L}ipschitz domains.
\newblock {\em Studia Math.}, 133(1):53--92, 1999.

\bibitem{MR1825645}
K.~Bogdan and T.~Byczkowski.
\newblock Potential theory of {S}chr\"odinger operator based on fractional
  {L}aplacian.
\newblock {\em Probab. Math. Statist.}, 20(2, Acta Univ. Wratislav. No.
  2256):293--335, 2000.

\bibitem{MR2283957}
K.~Bogdan and T.~Jakubowski.
\newblock Estimates of heat kernel of fractional {L}aplacian perturbed by
  gradient operators.
\newblock {\em Comm. Math. Phys.}, 271(1):179--198, 2007.

\bibitem{BSS}
K.~Bogdan, A.~St{\'o}s, and P.~Sztonyk.
\newblock Harnack inequality for stable processes on {$d$}-sets.
\newblock {\em Studia Math.}, 158(2):163--198, 2003.

\bibitem{MR2137058}
K.~Bogdan and P.~Sztonyk.
\newblock Harnack's inequality for stable {L}\'evy processes.
\newblock {\em Potential Anal.}, 22(2):133--150, 2005.

\bibitem{MR2320691}
K.~Bogdan and P.~Sztonyk.
\newblock Estimates of the potential kernel and {H}arnack's inequality for the
  anisotropic fractional {L}aplacian.
\newblock {\em Studia Math.}, 181(2):101--123, 2007.

\bibitem{MR1473631}
Z.-Q. Chen and R.~Song.
\newblock Intrinsic ultracontractivity and conditional gauge for symmetric
  stable processes.
\newblock {\em J. Funct. Anal.}, 150(1):204--239, 1997.

\bibitem{MR1920109}
Z.-Q. Chen and R.~Song.
\newblock General gauge and conditional gauge theorems.
\newblock {\em Ann. Probab.}, 30(3):1313--1339, 2002.

\bibitem{MR1329992}
K.~L. Chung and Z.~X. Zhao.
\newblock {\em From {B}rownian motion to {S}chr\"odinger's equation}, volume
  312 of {\em Grundlehren der Mathematischen Wissenschaften [Fundamental
  Principles of Mathematical Sciences]}.
\newblock Springer-Verlag, Berlin, 1995.

\bibitem{MR936811}
M.~Cranston, E.~Fabes, and Z.~Zhao.
\newblock Conditional gauge and potential theory for the {S}chr\"odinger
  operator.
\newblock {\em Trans. Amer. Math. Soc.}, 307(1):171--194, 1988.

\bibitem{MR591851}
E.~B. Davies.
\newblock {\em One-parameter semigroups}, volume~15 of {\em London Mathematical
  Society Monographs}.
\newblock Academic Press Inc. [Harcourt Brace Jovanovich Publishers], London,
  1980.

\bibitem{MR0117523}
N.~Dunford and J.~T. Schwartz.
\newblock {\em Linear {O}perators. {I}. {G}eneral {T}heory}.
\newblock With the assistance of W. G. Bade and R. G. Bartle. Pure and Applied
  Mathematics, Vol. 7. Interscience Publishers, Inc., New York, 1958.

\bibitem{MR1397498}
R.~L. Graham, D.~E. Knuth, and O.~Patashnik.
\newblock {\em Concrete mathematics}.
\newblock Addison-Wesley Publishing Company, Reading, MA, second edition, 1994.
\newblock A foundation for computer science.

\bibitem{MR2395164}
A.~Gulisashvili.
\newblock Classes of time-dependent measures, non-homogeneous {M}arkov
  processes, and {F}eynman-{K}ac propagators.
\newblock {\em Trans. Amer. Math. Soc.}, 360(8):4063--4098, 2008.

\bibitem{MR2253111}
A.~Gulisashvili and J.~A. van Casteren.
\newblock {\em Non-autonomous {K}ato classes and {F}eynman-{K}ac propagators}.
\newblock World Scientific Publishing Co. Pte. Ltd., Hackensack, NJ, 2006.

\bibitem{MR1736524}
W.~Hansen.
\newblock Harnack inequalities for {S}chr\"odinger operators.
\newblock {\em Ann. Scuola Norm. Sup. Pisa Cl. Sci. (4)}, 28(3):413--470, 1999.

\bibitem{MR2160104}
W.~Hansen.
\newblock Uniform boundary {H}arnack principle and generalized triangle
  property.
\newblock {\em J. Funct. Anal.}, 226(2):452--484, 2005.

\bibitem{MR2207878}
W.~Hansen.
\newblock Global comparison of perturbed {G}reen functions.
\newblock {\em Math. Ann.}, 334(3):643--678, 2006.

\bibitem{MR2195185}
W.~Hansen.
\newblock Simple counterexamples to the 3{G}-inequality.
\newblock {\em Expo. Math.}, 24(1):97--102, 2006.

\bibitem{MR1873235}
N.~Jacob.
\newblock {\em Pseudo differential operators and {M}arkov processes. {V}ol.
  {I}}.
\newblock Imperial College Press, London, 2001.
\newblock Fourier analysis and semigroups.

\bibitem{J3}
T.~Jakubowski.
\newblock The estimates for the {G}reen function in {L}ipschitz domains for the
  symmetric stable processes.
\newblock {\em Probab. Math. Statist.}, 22(2, Acta Univ. Wratislav. No.
  2470):419--441, 2002.

\bibitem{MR1335452}
T.~Kato.
\newblock {\em Perturbation theory for linear operators}.
\newblock Classics in Mathematics. Springer-Verlag, Berlin, 1995.
\newblock Reprint of the 1980 edition.

\bibitem{MR2316878}
P.~Kim and Y.-R. Lee.
\newblock Generalized 3{G} theorem and application to relativistic stable
  process on non-smooth open sets.
\newblock {\em J. Funct. Anal.}, 246(1):113--143, 2007.

\bibitem{MR2319644}
P.~Kim and R.~Song.
\newblock Estimates on {G}reen functions and {S}chr\"odinger-type equations for
  non-symmetric diffusions with measure-valued drifts.
\newblock {\em J. Math. Anal. Appl.}, 332(1):57--80, 2007.

\bibitem{MR2253015}
V.~Liskevich, H.~Vogt, and J.~Voigt.
\newblock Gaussian bounds for propagators perturbed by potentials.
\newblock {\em J. Funct. Anal.}, 238(1):245--277, 2006.

\bibitem{MR758799}
D.~Revuz.
\newblock {\em Markov chains}, volume~11 of {\em North-Holland Mathematical
  Library}.
\newblock North-Holland Publishing Co., Amsterdam, second edition, 1984.

\bibitem{Y}
K.~Yosida.
\newblock {\em Functional analysis}.
\newblock Classics in Mathematics. Springer-Verlag, Berlin, 1995.

\bibitem{MR1488344}
Q.~S. Zhang.
\newblock On a parabolic equation with a singular lower order term. {II}. {T}he
  {G}aussian bounds.
\newblock {\em Indiana Univ. Math. J.}, 46(3):989--1020, 1997.

\bibitem{MR1978999}
Q.~S. Zhang.
\newblock A sharp comparison result concerning {S}chr\"odinger heat kernels.
\newblock {\em Bull. London Math. Soc.}, 35(4):461--472, 2003.

\bibitem{MR1132313}
Z.~Zhao.
\newblock A probabilistic principle and generalized {S}chr\"odinger
  perturbation.
\newblock {\em J. Funct. Anal.}, 101(1):162--176, 1991.

\end{thebibliography}

\end{document}